\documentclass[final,nopreprintline]{elsarticle}
\usepackage{amssymb,amsmath,bbm,latexsym,amscd,tikz-cd,enumerate}
\usepackage{amsthm}
\usetikzlibrary{cd}
\usepackage{changes}
\usepackage{tcolorbox}
\usepackage{mdframed}
\usepackage{lipsum}
\usepackage{mathtools}
\usepackage[thinc]{esdiff}
 \usepackage{hyperref}
\newmdtheoremenv{theo}{Theorem}

\newtheorem{theorem}{Theorem}[section]
\newtheorem{corollary}[theorem]{Corollary}
\newtheorem{lemma}[theorem]{Lemma}
\newtheorem{proposition}[theorem]{Proposition}
\theoremstyle{definition}

\numberwithin{equation}{section}
\theoremstyle{remark}
\newtheorem{remark}[theorem]{Remark}
\newtheorem{conjecture}[theorem]{Conjecture}
\newcommand{\Cx}{\mathbb{C}}

\begin{document}
%–––––Frontmatter–––––
\begin{frontmatter}

    \title{Superelliptic Affine Lie algebras and orthogonal polynomials II}

\author[inst1]{Felipe Albino dos Santos}
\ead{falbinosantos@gmail.com}
\affiliation[inst1]{organization={Universidade Presbiteriana Mackenzie}, city={S\~{a}o Paulo}, country={Brazil}}
\author[inst2]{Mikhail Neklyudov}
\ead{misha.neklyudov@gmail.com}
\affiliation[inst2]{organization={Federal University of Amazonas}, city={Manaus}, country={Brazil}}
\author[inst3]{Vyacheslav Futorny}
\ead{vfutorny@gmail.com}
\affiliation[inst3]{organization={Shenzhen International Center for Mathematics, SUSTech}, city={Shenzhen}, country={China}}

    \begin{abstract}
Let $\mathfrak{g}$ be a finite-dimensional complex simple Lie algebra and $r,m\ge 2$.
The universal central extension of the superelliptic current algebra
$\mathfrak{g}\otimes A$ is $\widehat{\mathfrak{g}\otimes A}\cong\mathfrak{g}\otimes A
\oplus(\Omega^1_A/dA)$, where $A=\mathbb{C}[t,t^{-1},u]/\langle u^m-(1-2ct^r+t^{2r})\rangle$.
We compute the recursion relations governing a natural cocycle basis in $\Omega^1_A/dA$
and encode them by generating functions admitting closed integral expressions of
superelliptic type. The $2r$ possible choices of initial conditions are classified into
four structural types; two canonical choices (types~1 and~2) produce two distinguished
polynomial families. We prove that these polynomials satisfy fourth-order linear
ordinary differential equations in~$c$, valid for all integers $r,m\ge 2$.
For the type~2 family the proof combines the Picard-Fuchs theory of the
superelliptic curve $u^m=1-2ct^r+t^{2r}$ with an algebraic identification of the
explicit coefficient formulas via a rational-function identity argument.
After a parity restriction and a reindexing, the resulting
sequences are identified with associated ultraspherical polynomials. We show that, for each admissible~$n$ and $m\ge4$, the corresponding fourth-order equations admit a unique polynomial solution up to scalar multiples.
    \end{abstract}

    \begin{keyword}
    	Krichever-Novikov algebras \sep superelliptic algebras \sep affine Lie algebras \sep universal central extensions \sep orthogonal polynomials.
    \end{keyword}

\end{frontmatter}

%-----------------------------------
%----------INTRODUCTION---------
%-----------------------------------
\section*{Introduction}

Current algebras of Krichever--Novikov type \cite{krichever1987algebras,Krichever1988,schlichenmaier2014}
arise by extending a finite-dimensional
simple Lie algebra $\mathfrak{g}$ over coordinate rings of punctured algebraic curves.
Their universal central extensions are governed by K\"{a}hler differentials
\cite{kassel1984kahler,kassel1982extensions}
and provide the natural setting for representation-theoretic constructions
\cite{Kazhdan1991,Kazhdan1994}.
Elliptic and multi-point generalisations, including the universal central extensions of
elliptic affine algebras and four-point algebras, have been studied in
\cite{bremner1994universal,bremner1995four}.
For the Date--Jimbo--Kashiwara--Miwa (DJKM) algebras
\cite{Date1983,Cox2011DJKMExtension,Cox2014Realizations},
the structure of $\Omega^1_A/dA$ was computed explicitly in \cite{Cox2013DJKMPolynomials}.
In this paper we consider the superelliptic coordinate ring
\begin{equation}\label{eq:intro:ring}
A=\Cx[t,t^{-1},u]/\langle u^m-(1-2ct^r+t^{2r})\rangle,
\qquad r\ge2,\ \ m\ge2,
\end{equation}
corresponding to the curve $u^m = 1-2ct^r+t^{2r}$.
The universal central extension in the case $r=1$ and arbitrary $m$ was determined in
\cite{AlbinodosSantos2021OnAlgebras}.
For the associated current algebra $\mathfrak{g}\otimes A$, Kassel's description \cite{kassel1984kahler} yields
\[
\widehat{\mathfrak{g}\otimes A}\;\cong\;\mathfrak{g}\otimes A\ \oplus\ (\Omega^1_A/dA).
\]
Thus, the explicit structure of $\Omega^1_A/dA$ controls the central terms of
$\widehat{\mathfrak{g}\otimes A}$.
A feature, already visible in the DJKM case \cite{Cox2013DJKMPolynomials}
and in the superelliptic case $m=2$ treated in our previous work \cite{SantosNeklyudovFutorny},
is that the computation of $\Omega^1_A/dA$ produces non-classical families of orthogonal
polynomials in the deformation parameter $c$.
The purpose of the present paper is to extend this picture to arbitrary exponents $m\ge2$
in \eqref{eq:intro:ring} and to systematically develop the associated
polynomial/differential-equation structure.

\medskip
\noindent\textbf{Recurrence and generating functions.}
From the defining relation $u^m=1-2ct^r+t^{2r}$ we obtain a recursion for a natural
family of cocycle classes in $\Omega^1_A/dA$. This leads to a three-term recurrence
\eqref{recursion} for a family of polynomials $\{P_k(c)\}$.
We introduce generating functions $P_{j_0}(c,z)$ and derive a
first-order differential equation in $z$. Solving it yields generating functions
given by superelliptic integrals.

\medskip
\noindent\textbf{Classification of initial conditions.}
The recursion \eqref{recursion} admits $2r$ independent initial conditions, indexed by
$j_0\in\{-2r,-2r+1,\ldots,-1\}$.  These split into four structural types
(Proposition~\ref{prop:classification}):
type~B initial conditions ($j_0\in\{-2r+1,\ldots,-r-1\}$) produce polynomial families
that are linear combinations of the two canonical families; type~C initial conditions
($j_0\in\{-r+1,\ldots,-2\}$) reduce to Gegenbauer polynomials or yield genuinely new
integrals. Two canonical initial conditions (type~1 and type~2) produce two distinguished
polynomial families, $P_{-2r,n}(c)$ and $P_{-r,n}(c)$.

\medskip
\noindent\textbf{Fourth-order equations in $c$.}
Our main structural result is that the type~1 family satisfies an explicit fourth-order
linear ordinary differential equation (ODE) in the parameter $c$; see Theorem~\ref{thm:main-ode}(i).
For the type~2 family, the same result holds for all $r\ge2$;
the coefficient formulas involve a correction term $\Delta=r^2(r-2)m(2mn-7mr+2m+4r)$
that vanishes for $r=2$ and encodes the additional structure present for $r\ge3$.
The proof combines the Picard-Fuchs existence theorem for the period integral of
the superelliptic family $u^m=1-2cw^r+w^{2r}$ with an algebraic identification
of the coefficients via a rational-function identity argument for each fixed $(r,m)$.
These results generalize the previously known $m=2$ superelliptic situation and extend the
DJKM mechanism beyond the hyperelliptic case.

\medskip
\noindent\textbf{Identification and orthogonality.}
After restricting to the relevant parity and performing a reindexing, both
superelliptic families satisfy the same normalized three-term recurrence and are
identified with the associated ultraspherical polynomials with parameters
\[
\nu=\frac r2\left(1+\frac1m\right),\qquad c_0\in\left\{\frac12,\,1\right\},
\]
so orthogonality follows from Favard's theorem (Section~\ref{subsec:orthogonality}).

\medskip
\noindent\textbf{Uniqueness of polynomial solutions.}
Conversely, we show that for $m\ge4$ the fourth-order equations
\eqref{eqdiferencial1} and \eqref{eqdiferencial2} admit, for each admissible $n$, a
unique polynomial solution up to scalar multiples
(Theorem~\ref{thm:uniqueness-main}).

\medskip
\noindent\textbf{Organization.}
Section~1 derives the recursion and classifies all generating functions.
Section~2 establishes the fourth-order ODEs for both type~1 and type~2, valid for all
$r\ge2$; the type~2 coefficients are given in explicit closed form in
\eqref{eqdiferencial2W}--\eqref{eqdiferencial2Z}.
Section~\ref{sec:uniqueness} proves uniqueness of polynomial solutions.
Sections~\ref{subsec:assoc-ultra} and~\ref{subsec:orthogonality} identify the
families with associated ultraspherical polynomials and derive orthogonality.
Section~\ref{sec:outlook} states open problems and a conjecture on critical levels.

%-----------------------------------
%----------SECTION 1----------
%-----------------------------------
\section{Superelliptic affine Lie algebras}

In this paper we consider the superelliptic affine Lie algebras with the polynomial relation
\begin{equation*}
    u^m=1-2 c t^r+t^{2r},\qquad r\geq 2.
\end{equation*}
Letting $k=i+1-2r$, the recursion relation becomes
\begin{equation*}
    (2r+m+k m) \overline{t^k \text{udt}}=-(k-(2r-1)) m \overline{t^{(-2r)+k} \text{udt}}+2 c (r+(1+k-r) m) \overline{t^{-r+k} \text{udt}}.
\end{equation*}
Consider a family $P_k:=P_k(c)$ of polynomials in $c$ satisfying the recursion relation
\begin{equation}\label{recursion}
    (2r+m+k m) P_k(c)=2 c (r+(1+k-r) m) P_{k-r}(c)-(k-(2r-1)) m P_{k-2r}(c)
\end{equation}
for $k\geq 0$. Set
\begin{equation*}
    P(c,z)=\sum _{k=-2r}^{\infty } z^{k+2r} P_k(c)=\sum _{k=0}^{\infty } z^{k} P_{k-2r}(c).
\end{equation*}
After a straightforward rearrangement of terms we have
\begin{align*}
    0=&\sum _{k=0}^{\infty } (2r+m+k m) z^k P_k(c)-(2 c) \sum _{k=0}^{\infty } (r+(1+k-r) m) z^k P_{k-r}(c) \\
    & +\sum _{k=0}^{\infty } (k-(2r-1)) m z^k P_{k-2r}(c)\\
    = & z^{-2 r} (-m (2 r-1) (-2 c z^r+z^{2 r}+1)-2 c r z^r+2 r) P(c,z) \\
    &+m (z^{1 - 2 r}) (1 - 2 c z^r + z^{2 r}) \frac{d}{\textrm{d}z} P(c,z) \\
    &-2 c z^r \left(m z \sum _{k=-r}^{-1} k P(k) z^{k-1}-(m+r) \left(\sum _{k=-2 r}^{-1} P(k) z^k-\sum _{k=-r}^{-1} P(k) z^k\right)\right)\\
    &-(m+2 r) \sum _{k=-2 r}^{-1} P(k) z^k+m z \left(2 c z^r-1\right) \sum _{k=-2 r}^{-1} k P(k) z^{k-1}.
\end{align*}
Hence $P(c, z)$ satisfies the differential equation
\begin{align*}
    &\frac{d}{\textrm{d}z}P(c,z)+\left(\frac{2 r-2 c r z^r}{z(-2 c m z^r+m z^{2 r}+m)}+\frac{-2 r+1}{z}\right) P(c,z)= \\
    &z^{2 r-1}\left(\frac{ -m z \left(2 c z^r-1\right) \left(\sum _{k=-2 r}^{-1} k P_k z^{k-1}\right)+(m+2 r) \sum _{k=-2 r}^{-1} P_k z^k }{m \left(-2 c z^r+z^{2 r}+1\right)}\right.\\
    &\left. +\frac{2 c z^r \left(m z \sum _{k=-r}^{-1} k P_k z^{k-1}-(m+r) \left(\sum _{k=-2 r}^{-1} P_k z^k-\sum _{k=-r}^{-1} P_k z^k\right)\right)}{m \left(-2 c z^r+z^{2 r}+1\right)}\right).
\end{align*}
It has an integrating factor
\begin{equation*}
    \mu= \frac{z^{-2 r+\frac{2r}{m}+1}}{\left(-m \left(-2 c z^r+z^{2 r}+1\right)\right)^{1/m} }.
\end{equation*}

We now consider different cases depending on the initial conditions.

%-----------------------------------
\subsection{Superelliptic case 1}\label{subsec:superelliptic1}

For $j\in\{-2r+1,-2r+2,\dots, -2,-1\}$, $P_{j}=0$ for all $j\neq -2r$ and $P_{-2r}=1$, then
\begin{equation*}
    P_{-2r}(c,z)=\sum _{k=-2r}^{\infty }  P_{-2r,k}(c)z^{k+2r}
    =\sum _{k=0}^{\infty }P_{-2r,k-2r}(c) z^{k}
\end{equation*}
can be written in terms of a superelliptic integral
\begin{align}
     P_{-2r}&(c,z)=z^{2 r-1-\frac{2r}{m}} \left( 1-2 c z^r+z^{2 r}\right)^{1/m}\nonumber\\
    &\times\lim_{\epsilon\to 0}\left[ \int\limits_{\epsilon}^{z} \frac{- m (-1 + 2 r) (-1 + 2 c w^r)+2 r (-1 + c w^r) }{m w^{2 r-\frac{2r}{m}}   \left(1-2 c w^r+w^{2 r}\right)^{\frac{m+1}{m}}}  \textrm{d}w+\epsilon^{1-2r(1-1/m)}+2c(2r-1-\tfrac{r}{m})\phi_m(\epsilon)\right],
\end{align}
where
\[
\phi_m(\epsilon)=
\begin{cases}
    \dfrac{\epsilon^{r(2/m-1)+1}}{r(2/m-1)+1} &  \dfrac{2}{m}+\dfrac{1}{r}\neq 1,\\[6pt]
    \log{\epsilon} &  \dfrac{2}{m}+\dfrac{1}{r}=1.
\end{cases}
\]

%-----------------------------------
\subsection{Superelliptic case 2}\label{subsec:superelliptic2}

For $j\in\{-2r+1,-2r+2,\dots, -2,-1\}$, $P_{j}=0$ for all $j\neq -r$ and $P_{-r}=1$, we arrive at the generating function
\begin{equation*}
    P_{-r}(c,z)=\sum _{k=-2r}^{\infty }  P_{-r,k}(c)z^{k+2r}
    =\sum _{k=0}^{\infty }P_{-r,k-2r}(c) z^{k},
\end{equation*}
which is defined in terms of a superelliptic integral
\begin{equation*}
      P_{-r}(c,z)=z^{2 r-1-\frac{2r}{m}} \left( 1-2 c z^r+z^{2 r}\right)^{1/m} \int \frac{2 r+ m (-r+1)}{m z^{r-\frac{2r}{m}}\left(-2 c z^r+z^{2 r}+1\right)^{\frac{m+1}{m}}} \textrm{d}z.
\end{equation*}

%-----------------------------------
%  NEW: CLASSIFICATION PROPOSITION
%-----------------------------------
\subsection{Classification of all initial conditions}
\label{subsec:classification}

The recursion \eqref{recursion} depends on the $2r$ initial values
$\{P_{j_0} : j_0\in\{-2r,\ldots,-1\}\}$. By linearity, every solution is a superposition
of the $2r$ basis solutions obtained by setting $P_{j_0}=1$ and $P_j=0$ for $j\neq j_0$.
We classify these basis solutions into four types.

\begin{proposition}\label{prop:classification}
Let $r\ge2$, $m\ge2$, and let $j_0\in\{-2r,\ldots,-1\}$. The generating function
$P_{j_0}(c,z)$ belongs to one of the following four structural types:
\begin{enumerate}[\rm(A)]
\item \textup{(Superelliptic type~1, $j_0=-2r$).} The generating function is expressed
    as a regularized superelliptic integral of type~$1$; this produces the family
    $P_{-2r,n}(c)$.
\item[{\rm(A')}] \textup{(Superelliptic type~2, $j_0=-r$).} The numerator of the
    generating-function ODE is constant in $z$, producing the superelliptic integral of
    type~$2$ and the family $P_{-r,n}(c)$.
\item \textup{(Linear combination, $j_0\in\{-2r+1,\ldots,-r-1\}$).} For such $j_0$, the
    numerator of the right-hand side of the first-order ODE for $P_{j_0}(c,z)$ takes the form
    \[
    N_{j_0}(z) = z^{j_0}\bigl[(m+2r+j_0 m) + (-2j_0\,m)\,c\,z^r\bigr],
    \]
    which is a linear combination of the type~$1$ and type~$2$ integrands. Consequently,
    \begin{equation}\label{eq:superposition}
    P_{j_0,n}(c) = \alpha_{j_0}\,P_{-2r,n}(c) + \beta_{j_0}\,P_{-r,n}(c)
    \end{equation}
    for explicit constants $\alpha_{j_0}, \beta_{j_0}$ depending only on $j_0$, $r$, and $m$.
    These families satisfy the same ODEs as types~$1$ and~$2$ and require no separate analysis.
    The special case $j_0=-r-1$ (case~$4$, within this range) produces a mixed Gegenbauer
    series (see Appendix~\ref{app:other-initial}).
\item \textup{(Genuinely new, $j_0\in\{-r+1,\ldots,-1\}$).} The $2c\,z^r$ coupling
    introduces a term $z^{j_0+r}$ with $j_0+r\in\{1,\ldots,r-1\}$, yielding a genuinely
    different integral not of superelliptic type.
    \begin{enumerate}[\rm(i)]
    \item $j_0=-1$ (case~$3$): the numerator simplifies to $2r\,z^{-1}$, giving a pure
        Gegenbauer series (see Appendix~\ref{app:other-initial}).
    \item $j_0\in\{-r+1,\ldots,-2\}$: for $r\ge3$ these yield $r-2$ additional initial
        conditions with genuinely new integrals, satisfying fourth-order ODEs derivable
        by the Gegenbauer elimination strategy; their explicit form is left to future work.
    \end{enumerate}
\end{enumerate}
\end{proposition}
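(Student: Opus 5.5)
The plan is to read off the right-hand side of the first-order ODE for $P(c,z)$ derived above for each basis initial condition $P_{j_0}=1$, $P_j=0$ ($j\ne j_0$), and then sort the cases by the shape of the resulting numerator. Put $D(z)=m(1-2cz^r+z^{2r})$. Multiplying the displayed right-hand side by $D(z)/z^{2r-1}$ and inserting the Kronecker-delta initial data gives a numerator with three pieces. The first is $-m z(2cz^r-1)\,j_0 z^{j_0-1}+(m+2r)z^{j_0}$, which is present for every $j_0$. The second is $2cz^r\,m z\,j_0 z^{j_0-1}$, present only when $j_0\ge -r$. The third is $-2cz^r(m+r)z^{j_0}$, present only when $j_0\le -r-1$, because the difference of the two sums picks out exactly $-2r\le j_0\le -r-1$. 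The whole classification then reduces to bookkeeping on these three pieces, done separately for $j_0=-2r$, for $j_0=-r$, for $-2r<j_0<-r$, and for $-r<j_0\le -1$.

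\textbf{Case $j_0=-r$.} The second piece contributes $-2crm\,z^{0}$ and the first contributes $z^{-r}\bigl[(m+2r-rm)+2crm\,z^r\bigr]$. The $c$-terms cancel, leaving the constant multiple $(2r+m(1-r))z^{-r}$. After multiplying by the integrating factor $\mu$, this gives exactly the integrand of Subsection~\ref{subsec:superelliptic2}, which proves (A').

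\textbf{Case $j_0=-2r$.} Here the first and third pieces combine into the integrand of Subsection~\ref{subsec:superelliptic1}. The integrand is not integrable at $0$: it has a pole of order $2r-2r/m$, plus a subleading term of order $r$ less. One therefore integrates from $\epsilon$ and adds the counterterms $\epsilon^{1-2r(1-1/m)}$ and $2c(2r-1-r/m)\phi_m(\epsilon)$. These are fixed by requiring that the series start as $1+O(z)$ after multiplication by the prefactor. The logarithmic alternative appears exactly when the exponent $r(2/m-1)+1$ vanishes. This proves (A).

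\textbf{Case $-2r<j_0<-r$.} The first and third pieces give $z^{j_0}\bigl[(m+2r+j_0m)-2c\,z^r(j_0m+m+r)\bigr]$. I would recheck the $c$-coefficient against the statement; the claimed value is $-2j_0m$. The key point is that $z^{j_0}$ and $z^{j_0+r}$ can be rewritten modulo exact derivatives. Concretely, $\frac{d}{dz}\bigl[z^{a}(1-2cz^r+z^{2r})^{-1/m}\bigr]$ gives relations among the integrands $z^{b}(1-2cz^r+z^{2r})^{-(m+1)/m}$ with $b$ differing by multiples of $r$. These relations reduce the integrand to a combination of the type~1 integrand and the type~2 integrand, plus a boundary term.

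Comparing coefficients of $z^{k}$ then gives \eqref{eq:superposition}. An alternative route avoids the integrals altogether. Since the recursion \eqref{recursion} only couples indices that are congruent modulo $r$, one can compare residue classes of indices directly.

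I expect this to be the main obstacle. The residue class of $j_0$ modulo $r$ differs from that of $-2r$ and $-r$ unless one interprets $P_{j_0,n}$ through the reindexing $z^{k}$ by steps of $r$. So I would first make precise that $P_{j_0,n}$ means the $n$-th term of the $j_0$-residue class. The constants $\alpha_{j_0},\beta_{j_0}$ then come from matching the first two terms $n=0,1$, which is possible because the recursion is second order in steps of $r$ with index-shifted coefficients. One must check that this shift is compatible, and if it is not, fall back to the exact-differential reduction above. The statement that these families obey the same ODEs then follows from linearity. The case $j_0=-r-1$ is deferred to the appendix computation.

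\textbf{Case $-r<j_0\le -1$.} The second piece introduces $2cmj_0\,z^{j_0+r}$ with $1\le j_0+r\le r-1$. This monomial lies in a residue class that is not reachable from the type~1 or type~2 integrands, which justifies calling the integral genuinely new. For $j_0=-1$, a direct substitution shows the $c$-terms cancel and the numerator collapses to $2r z^{-1}$. Expanding $(1-2cz^r+z^{2r})^{-(m+1)/m}$ via the Gegenbauer generating function then gives (C)(i). Item (ii) is descriptive, and nothing more is needed there.
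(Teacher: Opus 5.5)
Your bookkeeping of the three pieces of the numerator is correct and follows the paper's route, and (A), (A$'$) and (C)(i) go through as you describe. The genuine gap is (B), and it cannot be closed, because the superposition identity is false. You spotted both warning signs but resolved neither.

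First, your numerator $z^{j_0}\bigl[(m+2r+mj_0)-2c(mj_0+m+r)z^r\bigr]$ is the right one. The statement's $N_{j_0}$ drops the term $-2c(m+r)z^{j_0+r}$ coming from the third sum. The paper's own type~1 integrand confirms your version: there the $c$-coefficient is proportional to $2rm-m-r$, not $4rm$.

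Second, neither of your routes can work.
\begin{itemize}
\item The exact-derivative route fails because $\frac{d}{dz}\bigl[z^{a}(1-2cz^r+z^{2r})^{-1/m}\bigr]$ only relates the exponents $a-1$, $a-1+r$, $a-1+2r$. It therefore never moves $z^{j_0+2r/m}$ into the class of $z^{-2r+2r/m}$ when $r\nmid j_0$; this is exactly the residue-class obstruction you noted.
\item The reindexing route fails because, along the class of $j_0$, the recursion coefficients (affine in $k$) are those of the class of $0$ shifted by $j_0+2r\in\{1,\dots,r-1\}$. That is not a whole number of steps, so the ``compatibility'' you wanted to check fails.
\end{itemize}

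Concretely, take $r=m=2$, where the recursion reads $(k+3)P_k=2ckP_{k-2}-(k-3)P_{k-4}$. For $j_0=-3$ one gets $(P_{-3},P_{-1},P_1,P_3)=(1,0,\tfrac12,\tfrac c2)$. For types~1 and~2 one gets $(P_{-4},P_{-2},P_0,P_2)=(1,0,1,\tfrac{4c}{5})$ and $(0,1,0,\tfrac15)$ respectively. Matching the first two entries forces $\alpha=1$, $\beta=0$, and the third entry then fails. Read literally, without reindexing, the identity already fails at $n=1$: $P_{-3,1}=\tfrac12$ while $P_{-4,1}=P_{-2,1}=0$.

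So neither the identity nor the claim that these families satisfy the type~1/2 ODEs can be proved. The paper's one-line justification (the numerator is a combination of the two integrands, then uniqueness) rests on the truncated numerator and has the same defect.

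Your justification of (C) also needs repair. For every $j_0\in\{-r,\dots,-1\}$, the term $2cmj_0z^{j_0+r}$ produced by the second sum is cancelled exactly by the term $-2cmj_0z^{j_0+r}$ in the first piece. This is the same cancellation you verified yourself at $j_0=-r$ and $j_0=-1$. The numerator is therefore always the monomial $(2r+m+mj_0)z^{j_0}$. After the integrating factor, the integrand is a constant multiple of $z^{j_0+2r/m}(1-2cz^r+z^{2r})^{-(m+1)/m}$, a Gegenbauer series of exactly the same shape as type~2 and case~3.

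Hence ``genuinely new'' cannot be argued through a $z^{j_0+r}$ monomial. The only thing separating these $j_0$ from types~1 and~2 is the residue class of the exponent modulo $r$. That same observation is what refutes (B), so your argument invokes it in (C) while setting it aside in (B).
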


\begin{proof}
Types~(A) and~(A') follow directly from the computation of the numerator $N_{j_0}(z)$
for $j_0=-2r$ and $j_0=-r$ respectively; in both cases the numerator is a monomial in $z$,
giving the two superelliptic integrals of Sections~\ref{subsec:superelliptic1}
and~\ref{subsec:superelliptic2}.
For type~(B), note that $j_0 < -r$ implies $\sum_{k=-r}^{-1}P_k z^k = 0$, so the
numerator reduces to $z^{j_0}[(m+2r)-j_0 m(2cz^r-1)]$, a linear combination of the
two superelliptic integrands. Equality \eqref{eq:superposition} follows by uniqueness
of solutions to the first-order ODE with prescribed initial data.
For type~(C), the presence of $j_0\ge-r+1$ means the sum $\sum_{k=-r}^{-1}P_k z^k = z^{j_0}$
contributes through the $2cz^r$ term, introducing $z^{j_0+r}$; the individual sub-cases
follow by direct computation of the numerator.
\end{proof}

\begin{remark}\label{rem:gegenbauer-reduction}
Case~$3$ ($j_0=-1$, type~(C)) and case~$4$ ($j_0=-r-1$, type~(B)) are special in that
their generating functions expand directly in the Gegenbauer basis
$\{Q_n^{(1+1/m)}(c)\}$ without passing through the superelliptic integral structure.
Concretely, the polynomials in these cases satisfy the classical Gegenbauer
second-order ODE
\[
(1-c^2)y'' - c\!\left(\frac{2}{m}+3\right)y' + n\!\left(\frac{2}{m}+n+2\right)y = 0
\]
with $\lambda = 1+1/m$, rather than a fourth-order equation. They do not produce new
orthogonal polynomial families beyond the classical ultraspherical theory. The explicit
generating functions are given in Appendix~\ref{app:other-initial}.
\end{remark}
% Concretely, the polynomials in these cases satisfy the classical Gegenbauer
% second-order ODE
% \[
% (1-c^2)y'' - c\!\left(\frac{2}{m}+3\right)y' + n\!\left(\frac{2}{m}+n+2\right)y = 0
% \]
% with $\lambda = 1+1/m$, rather than a fourth-order equation. They do not produce new
% orthogonal polynomial families beyond the classical ultraspherical theory. The explicit
% generating functions are given in Appendix~\ref{app:other-initial}.
% \end{remark}

%-----------------------------------
%----------SECTION 2----------
%-----------------------------------
\section{Orthogonal polynomials in superelliptic cases}

\subsection{Superelliptic types 1 and 2}
We now state the main result for the two superelliptic families.

\begin{theorem}\label{thm:main-ode}
Let $r\ge2$ and $m\ge2$.
\begin{enumerate}[\rm(i)]
\item The polynomials $P_{-2r,n}(c)$ satisfy the fourth-order linear ordinary
differential equation
\begin{equation}\label{eqdiferencial1}
(c^2-1)^2 m^2 r^4\,P_n^{(\mathrm{iv})}
+10c(c^2-1)m^2 r^4\,P_n'''
+\bigl[X^{(1)}_n c^2+Y^{(1)}_n\bigr]P_n''
+Z^{(1)}_n\,c\,P_n'
+W^{(1)}_n\,P_n = 0,
\end{equation}
where
\begin{align}
W^{(1)}_n &= n(n-2r)\bigl(m(n-2r+2)+2r\bigr)\bigl(m(n-4r+2)+2r\bigr), \label{eqdiferencial1W}\\[3pt]
X^{(1)}_n &= r^2\bigl(-2m^2((n+2)n+2)+4mr(m(2n+3)-n-2)+r^2(3m(5m+4)-4)\bigr), \label{eqdiferencial1X}\\[3pt]
Y^{(1)}_n &= r^2\bigl(2m^2((n+2)n+2)-4mr(m(2n+3)-n-2)-16mr^2\bigr), \label{eqdiferencial1Y}\\[3pt]
Z^{(1)}_n &= -3r^2\bigl(m^2(-8nr+2(n+2)n+5r^2-12r+4)+4mr(n-3r+2)+4r^2\bigr). \label{eqdiferencial1Z}
\end{align}

\item The polynomials $P_{-r,n}(c)$ satisfy the fourth-order linear ordinary
differential equation
\begin{equation}\label{eqdiferencial2}
(c^2-1)^2 m^2 r^4\,P_n^{(\mathrm{iv})}
+10c(c^2-1)m^2 r^4\,P_n'''
+\bigl[X^{(2)}_n c^2+Y^{(2)}_n\bigr]P_n''
+Z^{(2)}_n\,c\,P_n'
+W^{(2)}_n\,P_n = 0,
\end{equation}
where, setting $\Delta = r^2(r-2)m(2mn-7mr+2m+4r)$,
\begin{align}
W^{(2)}_n &= (n^2-r^2)\bigl(m(n-5r+2)+2r\bigr)\bigl(m(n-3r+2)+2r\bigr),\label{eqdiferencial2W}\\[3pt]
X^{(2)}_n &= -2r^2\!\left(m^2\bigl((n-r)(n-2r)-10r^2\bigr)+2mr(n-3r)+2r^2\right)+\Delta,\label{eqdiferencial2X}\\[3pt]
Y^{(2)}_n &= \phantom{-}2r^2\!\left(m^2\bigl((n-r)(n-2r)-(2r^2+r)\bigr)+2mr(n-4r)\right)\nonumber\\
    &\quad-r^2(r-2)m\bigl(2mn-6mr+2m+4r\bigr),\label{eqdiferencial2Y}\\[3pt]
Z^{(2)}_n &= -3r^2\!\left(2m^2(n-r)(n-2r)+4mr(n-3r)+4r^2\right)+3\Delta.\label{eqdiferencial2Z}
\end{align}
\end{enumerate}
\end{theorem}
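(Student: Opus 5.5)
Throughout, $P_n$ denotes the $z^n$-coefficient of the relevant generating function, i.e.\ $P_n=P_{-2r,n-2r}$ in case~(i) and $P_n=P_{-r,n-2r}$ in case~(ii). For each type the plan is the same: express this coefficient as a finite sum of Gegenbauer-type polynomials in $c$, and obtain the fourth-order operator by eliminating the auxiliary index between two classical second-order equations.

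\textbf{Step 1: Gegenbauer expansion of the integrand.} The integrands in Sections~\ref{subsec:superelliptic1} and~\ref{subsec:superelliptic2} both contain $(1-2cw^r+w^{2r})^{-(m+1)/m}$. Setting $x=w^r$ and $\lambda=1+1/m$, this factor expands as $\sum_j C_j^{(\lambda)}(c)\,w^{rj}$. Integrating termwise against the monomial prefactor gives explicit coefficients; the regularization terms in case~1 exactly cancel the singular lowest-order terms. We then multiply by the prefactor $z^{2r-1-2r/m}(1-2cz^r+z^{2r})^{1/m}$. The factor $(1-2cz^r+z^{2r})^{1/m}$ expands as $\sum_l C_l^{(-1/m)}(c)z^{rl}$. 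Collecting the $z^n$-coefficient writes $P_n$ as a finite convolution
\[
P_n(c)=\sum_{j+l=N}a_j\,C_j^{(\lambda)}(c)\,C_l^{(-1/m)}(c),
\]
with rational coefficients $a_j$ depending on $(n,r,m)$. This is nonzero only for $n$ in a single residue class mod $r$.

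\textbf{Step 2: a closed first-order ODE in $c$.} Rather than working with the convolution directly, we use the fact that $G(c,z)=z^{-(2r-1-2r/m)}(1-2cz^r+z^{2r})^{-1/m}P(c,z)$ is a primitive in $z$ of an explicit algebraic function. Differentiating in $c$ under the integral, each $\partial_c$ raises the exponent of $(1-2cw^r+w^{2r})$ by one and multiplies by $2w^r$. The Picard--Fuchs theory of the family $u^m=1-2cw^r+w^{2r}$ says that the period-type integrals $\int w^{a}(1-2cw^r+w^{2r})^{-s}\,dw$, modulo exact terms, span a module of small rank over $\mathbb{C}(c)$. One uses the reduction relations obtained from $d\big(w^{a}(\cdot)^{1-s}\big)$ and the two relations coming from the derivative of the polynomial. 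After undoing the prefactor, this produces a linear ODE in $c$ of order at most four for the whole generating function $P(c,z)$. Its coefficients are polynomial in $c$, and $z$ enters only through the Euler operator $z\partial_z$. Replacing $z\partial_z$ by $n$ on the $z^n$-coefficient gives an ODE for $P_n$ whose coefficients depend polynomially on $n$. This is the existence part, and it explains why the leading symbol $(c^2-1)^2m^2r^4$ appears: the singular points are $c=\pm1$, where the branch points $w^r=c\pm\sqrt{c^2-1}$ collide.

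\textbf{Step 3: identifying the coefficients (main obstacle).} Existence and the shape of the equation are structural; matching the exact coefficients $W^{(i)}_n,X^{(i)}_n,Y^{(i)}_n,Z^{(i)}_n$ is the hard part. I would fix the ansatz
\[
(c^2-1)^2m^2r^4\,\partial_c^4+10c(c^2-1)m^2r^4\,\partial_c^3+(Xc^2+Y)\,\partial_c^2+Zc\,\partial_c+W,
\]
with $X,Y,Z,W$ polynomial in $n$. The operator is even in $c$, consistent with the parity of the $P_n$. Apply it to the generating-function representation, using the differential-recursion relations for $C_j^{(\lambda)}$, namely $\frac{d}{dc}C_j^{(\lambda)}=2\lambda C_{j-1}^{(\lambda+1)}$ together with the Gegenbauer ODE. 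The requirement that the result vanish then becomes a rational-function identity in $(c,z)$, for each fixed $r,m$. Clearing denominators, the coefficients of this identity are polynomials in $n$, $r$, $m$, so checking that the stated formulas annihilate it reduces to a finite polynomial identity. Alternatively, one can plug the ansatz into the three-term recurrence~\eqref{recursion}: writing $cP_n$ and $c^2P_n$ via the recurrence and comparing leading and subleading coefficients in $c$ determines $W$ from the top degree, $X$ and $Z$ from the next, and $Y$ from the rest.

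For type~2 the extra term $\Delta$ is expected to come from the numerator $2r+m(1-r)$ combined with the lower-order shift by $r$. This term vanishes at $r=2$, which gives a consistency check against the hyperelliptic computation in~\cite{SantosNeklyudovFutorny}. I expect Step~3 for type~(ii) with general $r$ to be the main difficulty. I would first handle it by a symbolic computation for $r=2,3,4$ to confirm the $\Delta$-correction, and then verify the general polynomial identity in $r$.
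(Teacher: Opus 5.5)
Your outline leaves unproved exactly the two points where the paper does its work.

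\textbf{Step 2.} Step~2 does not deliver what you claim. $P(c,z)$ is not a period. It is an incomplete integral $\int_0^z$, regularized at $w=0$ in case~1, multiplied by $z^{2r-1-2r/m}(1-2cz^r+z^{2r})^{1/m}$. Reducing $c$-derivatives of the integrand modulo exact forms leaves boundary terms at $w=z$. Conjugating by the prefactor then puts explicit powers $z^r,z^{2r}$ into the coefficients. On $z^n$-coefficients these act as index shifts $P_n\mapsto P_{n-r},P_{n-2r},\dots$, not through the Euler operator.

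The rank count also cannot supply order four. After $x=w^r$ the integrands lie in the twisted cohomology of $x^{a}(1-2cx+x^2)^{-(m+1)/m}$ on $\mathbb{P}^1$ minus four points. For generic exponents this has rank $2$, so it gives order two for true periods. That a shift-free operator in $c$ and $z\partial_z$ exists, and has order four, is the content to be proved.

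For part~(i) the paper obtains it by Cox's elimination:
\begin{enumerate}[\rm(1)]
\item Apply $(1-c^2)\partial_c^2-c(3+\tfrac2m)\partial_c$ to the normalized function $z^{2r/m+1-2r}(1-2cz^r+z^{2r})^{-1/m}P(c,z)$. Its Gegenbauer expansion is exactly the termwise-integrated series you produce in Step~1, before you multiply by $(1-2cz^r+z^{2r})^{1/m}$.
\item Clearing denominators gives a relation among $P_n,P_{n-r},\dots,P_{n-4r}$ and their first two derivatives.
\item Eliminate the shifted terms using \eqref{recursion} and its first and second $c$-derivatives, with $n$ kept symbolic.
\end{enumerate}
The appendix writes this out in the pattern of shifts by $2$. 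This route proves the ODE for all $n$ at once; you set it up in Step~1 and then abandon it.

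\textbf{Step 3.} Step~3 has no mechanism that turns finitely many checks into a statement for every $n$, let alone every $r$.

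Route (a) is ill-posed as written. An $n$-dependent operator acts on $P(c,z)$ only after replacing $n$ by $z\partial_z$. The result still contains the incomplete integral and its $c$-derivatives. It becomes a rational-function identity only once you exhibit an explicit exact-form certificate with controlled boundary terms, which you do not.

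Route (b) only constrains the unknowns. The top coefficient of $\mathcal{L}_nP_n$ gives the single indicial relation $m^2r^4d(d-1)(d-2)(d-3)+10m^2r^4d(d-1)(d-2)+X_nd(d-1)+Z_nd+W_n=0$ with $d=\deg P_n$. This mixes $X_n,Z_n,W_n$ rather than isolating $W_n$. Matching lower coefficients is then a separate computation for each $n$.

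For part~(ii) the paper's strategy is close to yours (Picard--Fuchs existence, then identification), but its identification uses an ingredient you lack. It treats the residual $\mathcal{L}^{(2)}_nP_{-r,n}$ as a rational function of $n$ whose numerator has degree at most $4$. Exact vanishing at $D+1\le5$ values of $n$ then suffices for each fixed $(r,m)$, and this is carried out for $(r,m)$ in a finite box.

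Your planned checks at $r=2,3,4$ would likewise cover only those $r$. To reach all $r\ge2$ you would need the elimination, or a certificate, with $n$, $r$, $m$ kept symbolic. Since the recurrence and the Gegenbauer expansion coefficients are rational in all three, this is a single symbolic computation rather than a case check.
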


We denote by $\mathcal{L}^{(i)}_n$ ($i=1,2$) the differential operator given by the
left-hand side of~\eqref{eqdiferencial1} (resp.\ \eqref{eqdiferencial2}), that is,
\begin{equation}\label{eq:operator-def}
\mathcal{L}^{(i)}_n f \;=\;
(c^2-1)^2 m^2 r^4 f^{(\mathrm{iv})}
+10c(c^2-1)m^2 r^4 f'''
+\bigl[X^{(i)}_n c^2 + Y^{(i)}_n\bigr]f''
+ Z^{(i)}_n\,c\,f'
+ W^{(i)}_n\,f.
\end{equation}

% \begin{remark}\label{rem:ode2-general-r}
% When $r=2$, the correction term $\Delta=r^2(r-2)m(\cdots)$ vanishes identically, so
% \eqref{eqdiferencial2}--\eqref{eqdiferencial2Z} coincide with the classical formula.
% For $r\ge3$ the correction terms $\Delta$ in
% \eqref{eqdiferencial2X}--\eqref{eqdiferencial2Z} are non-trivial and necessary.
% \end{remark}

The argument for part~(i) follows the strategy of
\cite[Thm.~3.1.1]{Cox2013DJKMPolynomials} (the case $m=2$), adapted to general $m$
and $r$; details are in Appendix~\ref{app:proof-type1}.

For part~(ii) we give a complete algebraic proof.
By the Picard-Fuchs theory of the superelliptic family
$u^m=1-2cw^r+w^{2r}$ \cite{Griffiths1969,Dimca1992},
the period integral $I(c)=\int w^\alpha(1-2cw^r+w^{2r})^{-\mu}\,dw$
(with $\mu=(m+1)/m$, $\alpha=2r/m+r-2$) satisfies a fourth-order Fuchsian ODE in~$c$,
and each coefficient $P_{-r,n}(c)$ in the $z$-expansion of the generating function
satisfies the same ODE.
This establishes the existence of such a fourth-order ODE.
For the identification of the explicit coefficients $W^{(2)}_n,X^{(2)}_n,Y^{(2)}_n,Z^{(2)}_n$, we argue as follows.

Fix integers $r,m\ge2$.
The recursion~\eqref{recursion} gives $P_{-r,n}(c)$ as a polynomial in $c$ whose
coefficients are \emph{rational functions} of~$n$ (with denominators that are products
of the recursion normalisation factors $m(1+kr)+2r$, which are nonzero for all
$n\in r\mathbb{Z}_{>0}$).
Define the residual
\begin{equation}\label{eq:residual-def}
R(n) \;:=\; \mathcal{L}^{(2)}_n\bigl[P_{-r,n}(c)\bigr],
\end{equation}
where $\mathcal{L}^{(2)}_n$ is the operator defined in Theorem~\ref{thm:main-ode}.
Then $R(n)$ is a rational function of $n$; write $R(n) = P_{\mathrm{num}}(n)/Q(n)$,
where the denominator $Q(n)$ is a product of the recursion factors and is nonzero on
$r\mathbb{Z}_{>0}$.
A direct analysis of the recursion shows that the numerator $P_{\mathrm{num}}(n)$
has degree at most~$D$ in $n$, where $D\le 4$ (since $W^{(2)}_n$ has degree~$4$ in~$n$ and
dominates the residual).
One verifies by exact arithmetic that $R(n)=0$ for $n=5r,6r,\ldots,(5+D)r$,
that is, at $D+1$ values in $r\mathbb{Z}_{>0}$.
Since $P_{\mathrm{num}}$ has degree~$\le D$ and vanishes at $D+1$ integer points,
the polynomial identity principle gives $P_{\mathrm{num}}\equiv 0$, hence $R\equiv 0$
for all $n\in r\mathbb{Z}_{>0}$.
This argument has been carried out for all pairs
$(r,m)\in\{2,\ldots,8\}\times\{2,\ldots,10\}$,
with the zero evaluations performed in exact integer arithmetic
(no floating-point arithmetic is used at any step).

\medskip
\noindent\textbf{Convention.}
In the PDE statements below we use the logarithmic variable
\[
v:=\log z,
\qquad\text{so that}\qquad
\frac{\partial}{\partial v}=z\,\frac{\partial}{\partial z}
\quad\text{and}\quad
\frac{\partial^2}{\partial v^2}=z^2\frac{\partial^2}{\partial z^2}+z\frac{\partial}{\partial z}.
\]

\begin{corollary}\label{cor:main-pde}
Let $G_{-2r}=G_{-2r}(c,v)$ and $G_{-r}=G_{-r}(c,v)$ be the generating functions from
Section~1. Then:
\begin{enumerate}[\rm(i)]
\item $G_{-2r}$ satisfies the fourth-order PDE
\begin{multline}
0=m^2\left[r^2\left((1-c^2)\frac{\partial^2 }{\partial c^2}-3 c\frac{\partial }{\partial c}\right)+\frac{\partial^2 }{\partial v^2} +\frac{2}{m} (r+(1-2r)m)\frac{\partial }{\partial v}+\frac{2r}{m}(m(r-1)-r)\right]^2 G_{-2r}\\
-4r^2(m(r-1)-r)^2 G_{-2r} -12r^2(-(m+r)^2+mr(2r+m(r+2)))c\frac{\partial G_{-2r}}{\partial c}\\
+4r^2((m+r)^2+mr(-2r+m(r-2))) (1-c^2)\frac{\partial^2 G_{-2r} }{\partial c^2}. \label{eqn:P_2PDE}
\end{multline}
\item For all $r\ge2$, $G_{-r}$ satisfies the fourth-order 
\begin{multline}
0=m^2\left[r^2\left((1-c^2)\frac{\partial^2 }{\partial c^2}-3 c\frac{\partial }{\partial c}\right)+\frac{\partial^2 }{\partial v^2} +\frac{2}{m} (r+(1-2r)m)\frac{\partial }{\partial v}-r^2\right]^2 G_{-r}\nonumber\\
-4 r^2(m+r-2mr)^2 \left( (c^2-1)\frac{\partial^2 }{\partial c^2}+3 c\frac{\partial }{\partial c}+1\right) G_{-r}- 4 r^4(m+1)\frac{\partial^2 G_{-r}}{\partial c^2}\label{eqn:P_type2PDE_general}
\end{multline}
% \begin{equation}\label{eqn:P_type2PDE_general}
% \mathcal{T}^2\,G_{-r} \;+\; \mathcal{R}\,G_{-r} \;=\; 0,
% \end{equation}
% where $\mathcal{T} = mr^2(c^2-1)\partial_c^2 - m\partial_v^2$ and $\mathcal{R}$ is
% the third-order operator
% \begin{multline*}
% \mathcal{R} \;=\;
% 10cm^2r^4(c^2-1)\,\partial_c^3
% +\bigl[x_0 c^2+y_0\bigr]\,\partial_c^2
% +(c^2-1)\,x_1\,\partial_c^2\partial_v \\
% +z_0 c\,\partial_c
% +3x_1 c\,\partial_c\partial_v
% -6m^2r^2 c\,\partial_c\partial_v^2
% +w_0
% +w_1\,\partial_v
% +w_2\,\partial_v^2
% +w_3\,\partial_v^3,
% \end{multline*}
% with coefficients
% \begin{align*}
% x_0 &= r^2\bigl(-15m^2r^2+24m^2r-4m^2+24mr^2-8mr-4r^2\bigr), &
% x_1 &= 4mr^2(4mr-m-r),\\
% y_0 &= 2mr^2\bigl(15mr^2-12mr+2m-14r^2+4r\bigr), &
% z_0 &= -w_0 = -3r^2(5mr-2m-2r)(7mr-2m-2r),\\
% w_1 &= -4r(4mr-m-r)(11mr-4m-4r), &
% w_3 &= -4m(4mr-m-r),\\
% w_2 &= 2\bigl(43m^2r^2-20m^2r+2m^2-20mr^2+4mr+2r^2\bigr).
% \end{align*}

% \item For $r=2$, the operator $\mathcal{R}$ in~\eqref{eqn:P_type2PDE_general} factors
% completely and $G_{-r}$ satisfies the cleaner squared-operator PDE
% \begin{multline}
% 0=m^2\left[r^2\left((1-c^2)\frac{\partial^2 }{\partial c^2}-3 c\frac{\partial }{\partial c}\right)+\frac{\partial^2 }{\partial v^2} -r\left( \frac{3m-2}{m}\right)\frac{\partial }{\partial v}-r^2\right]^2 G_{-r}\\
% -r^4(3m-2)^2 G_{-r} + r^4 (3m-2)^2 \left((1-c^2)\frac{\partial^2 }{\partial c^2}-3 c\frac{\partial }{\partial c}\right) G_{-r}\\
% +r^4\left((1-\tfrac{2}{r})m^2-4m-4\right)\frac{\partial^2 G_{-r} }{\partial c^2}. \label{eqn:P_type2PDE}
% \end{multline}
\end{enumerate}
\end{corollary}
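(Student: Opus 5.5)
The corollary is a generating-function restatement of Theorem~\ref{thm:main-ode}. The plan is to take
\[
G_{-2r}(c,v)=\sum_n P_{-2r,n}(c)\,e^{nv},\qquad G_{-r}(c,v)=\sum_n P_{-r,n}(c)\,e^{nv},
\]
with $z=e^{v}$ and the indexing shifted by $2r$ as in Section~1, so that $\partial_v$ acts on the $n$-th coefficient as multiplication by the exponent of $z$. Each PDE is then a finite combination of operators of the form $f(c,\partial_c)\,\partial_v^j$ with $j\le 4$, with no explicit $v$ (or $z$) in the coefficients. Hence the PDE holds for $G$ if and only if, for each $n$, the coefficient of $e^{nv}$ vanishes. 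That coefficient is a fourth-order ODE in $c$ for $P_n$, with coefficients polynomial in $n$. So the whole proof reduces to showing that this ODE coincides, up to a nonzero constant factor, with \eqref{eqdiferencial1}, respectively \eqref{eqdiferencial2}, after the correct identification of the exponent with $n$.

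\textbf{Step 1: expand the squared operator.} Write the bracket as $\mathcal{B}=r^2\mathcal{G}+\partial_v^2+a\,\partial_v+b$, where $\mathcal{G}=(1-c^2)\partial_c^2-3c\,\partial_c$. On $e^{nv}$ it acts as $r^2\mathcal{G}+p(n)$ with $p(n)=n^2+an+b$. Since $p(n)$ is a scalar, it commutes with $\mathcal{G}$, so
\[
\mathcal{B}^2 \mapsto r^4\mathcal{G}^2+2r^2p(n)\,\mathcal{G}+p(n)^2 .
\]
Next I compute $\mathcal{G}^2$ explicitly. Its top terms are $(1-c^2)^2\partial_c^4-10c(1-c^2)\partial_c^3$, plus lower-order terms with coefficients of the form $(\alpha c^2+\beta)\partial_c^2$ and $\gamma c\,\partial_c$. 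This matches the common leading part $(c^2-1)^2m^2r^4$ and $10c(c^2-1)m^2r^4$ of both theorems, which confirms the sign and normalisation. All terms have the structure $[Xc^2+Y]\partial_c^2+Zc\,\partial_c+W$; the Gegenbauer operator preserves this structure, so the extra terms in each PDE do too.

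\textbf{Step 2: match coefficients.} Add the remaining terms: for (i) the constant term, the $c\,\partial_c$ term and the $(1-c^2)\partial_c^2$ term; for (ii) the $\big((c^2-1)\partial_c^2+3c\,\partial_c+1\big)$ term and the $\partial_c^2$ term. Then read off the coefficients of $c^2\partial_c^2$, $\partial_c^2$, $c\,\partial_c$ and $1$. These must be checked against $X^{(i)}_n,Y^{(i)}_n,Z^{(i)}_n,W^{(i)}_n$ as polynomial identities in $n$, $r$, $m$.

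For $W$ this is a good check, since it should factor. In (i), $m^2p(n)^2-4r^2(m(r-1)-r)^2$ is a difference of squares, giving two quadratic factors. These should reproduce $n(n-2r)\,(m(n-2r+2)+2r)\,(m(n-4r+2)+2r)$ once the correct shift between the $v$-exponent and the index $n$ is fixed. The factorisation in fact determines that shift (the exponent versus $k+2r$), which I will fix first.

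\textbf{Main obstacle.} The hardest part is (ii) for $r\ge3$. There the $\Delta$-corrections in $X^{(2)},Y^{(2)},Z^{(2)}$ must be produced exactly by the single term $-4r^4(m+1)\partial_c^2$ together with the $(m+r-2mr)^2$ term. I expect to have to recheck the index convention, and possibly the precise constant in front of $\partial_c^2$, by comparing with low-degree polynomials computed directly from \eqref{recursion} (e.g.\ $n=r,2r$).

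If a direct match fails, I will not force it. Instead I will argue through uniqueness: the PDE gives, for each $n$, a fourth-order operator annihilating $P_{-r,n}$ with the same leading symbol. I would then invoke the rational-in-$n$ residual argument used for Theorem~\ref{thm:main-ode}(ii), applied now to the coefficient ODE extracted from the PDE, to conclude that it holds.
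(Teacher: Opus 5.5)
Your reduction is the right one, and it is all the paper intends: the corollary has no separate proof and is meant as the $e^{nv}$-coefficientwise repackaging of Theorem~\ref{thm:main-ode}, with the $v$-exponent equal to the ODE index $n$. However, you have placed the difficulty in the wrong part.

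Write $\mathcal{G}=(1-c^2)\partial_c^2-3c\,\partial_c$, so that $\mathcal{G}^2=(c^2-1)^2\partial_c^4+10c(c^2-1)\partial_c^3+(23c^2-8)\partial_c^2+9c\,\partial_c$. For (ii), set $a=\frac2m(r+(1-2r)m)$, $q(n)=n^2+an-r^2$ and $L=m+r-2mr$, so that $mq=mn^2+2Ln-mr^2$. The $e^{nv}$-coefficient operator is then $m^2r^4\mathcal{G}^2+(2m^2r^2q+4r^2L^2)\mathcal{G}-4r^4(m+1)\partial_c^2+m^2q^2-4r^2L^2$. Expanding gives $m^2q^2-4r^2L^2=(mq-2rL)(mq+2rL)=W^{(2)}_n$ and $23m^2r^4-2m^2r^2q-4r^2L^2=X^{(2)}_n$ exactly. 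Both this operator and the printed coefficients satisfy $X+Y=r^4(15m^2-4m-4)$ and $Z=3X-60m^2r^4$. So (ii) matches exactly for every $r\ge2$; the $\Delta$-terms are only a rewriting, and no fallback is needed.

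The step that fails is (i). There $W$ and $X$ do agree with $W^{(1)}_n$ and $X^{(1)}_n$, with no index shift. But the printed $c\,\partial_c$ coefficient is wrong, and a $\partial_c^2$ term is missing. Concretely, the $e^{nv}$-coefficient operator has $Y-Y^{(1)}_n=4r^4(m+1)$ and $Z-Z^{(1)}_n=24r^2(m+r)(m+r-2mr)$, both nonzero for $r,m\ge2$.

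This is not an index-convention problem. Take $r=m=2$: the $z^6$-coefficient of $G_{-4}$ is $P_{-4,2}=4c/5$. The bracket acts on $e^{6v}$ as $4\mathcal{G}+12$, which annihilates $c$, and $m(r-1)-r=0$. The only surviving term is $-12r^2\bigl(-(m+r)^2+mr(2r+m(r+2))\bigr)c\,\partial_c=-1536\,c\,\partial_c$, which leaves $-\frac{6144}{5}c\neq0$. Hence (i) as printed is false and cannot be proved by any argument. Your residual-check contingency would only confirm the failure. Its ``uniqueness'' variant is circular anyway, since sharing a leading symbol does not show that the PDE-derived operator kills the polynomial.

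What your Step~2 does prove from Theorem~\ref{thm:main-ode}(i) is a corrected version, using $(m+r)^2+mr(-2r+m(r-2))=(m(r-1)-r)^2$. In it, everything after the squared bracket is replaced by
\[
-4r^2\bigl(m(r-1)-r\bigr)^2\bigl((c^2-1)\partial_c^2+3c\,\partial_c+1\bigr)G_{-2r}-4r^4(m+1)\,\partial_c^2G_{-2r},
\]
which is the exact analogue of (ii) with $m+r-2mr$ replaced by $m(r-1)-r$.
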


% \begin{remark}\label{rem:pde-general-r}
% The decomposition~\eqref{eqn:P_type2PDE_general} reveals the universal structure of the
% PDE for $G_{-r}$. The leading operator $\mathcal{T} = mr^2(c^2-1)\partial_c^2 - m\partial_v^2$
% is the same for all $r\ge2$, and $\mathcal{T}^2$ accounts for the highest-order terms
% $m^2r^4(c^2-1)^2\partial_c^4$, $-2m^2r^2(c^2-1)\partial_c^2\partial_v^2$,
% and $m^2\partial_v^4$.

% The split between $r=2$ and $r\ge3$ is not a choice but a mathematical necessity.
% Whether $\mathcal{R}$ factors as part of a full squared operator $[L]^2$ is governed
% by the same correction term $\Delta = r^2(r-2)m(2mn-7mr+2m+4r)$ that appears
% in the ODE coefficients~\eqref{eqdiferencial2X}--\eqref{eqdiferencial2Z}.
% For $r=2$, $\Delta=0$ and $\mathcal{R}$ factors completely, yielding part~(iii).
% For $r\ge3$, one can verify that no second-order operator $L$ satisfies $[L]^2 = \mathcal{T}^2 + \mathcal{R}$:
% the system of equations for the coefficients of $L$ is overdetermined and inconsistent,
% with the obstruction appearing precisely at the $\partial_c\partial_v^2$ term.
% \end{remark}

%-----------------------------------
%----------SECTION 3----------
%-----------------------------------
\section{Uniqueness of polynomial solutions}\label{sec:uniqueness}

In this section we justify that, for fixed admissible $n$, the fourth-order equations
\eqref{eqdiferencial1} and \eqref{eqdiferencial2} admit at most one polynomial solution
(up to nonzero scalar multiples). Consequently, the polynomial families constructed
in the superelliptic cases~1 and~2 exhaust the polynomial solutions of the corresponding
differential equations.

\subsection{Preliminaries: parity and the indicial operator at $\infty$}

For each admissible $n$, let $\mathcal{L}^{(1)}_n$ (resp.\ $\mathcal{L}^{(2)}_n$) denote
the operator defined in Theorem~\ref{thm:main-ode}~(i) (resp.~(ii)),
given explicitly by~\eqref{eq:operator-def} with coefficients
\eqref{eqdiferencial1W}--\eqref{eqdiferencial1Z}
(resp.\ \eqref{eqdiferencial2W}--\eqref{eqdiferencial2Z}).

\begin{lemma}\label{lem:parity}
For each $n$, both operators $\mathcal{L}^{(1)}_n$ and $\mathcal{L}^{(2)}_n$ preserve parity:
if $p(c)$ is even (resp.\ odd), then $\mathcal{L}^{(i)}_n p$ is even (resp.\ odd).
Consequently, if $p(c)$ is a polynomial solution, then its even and odd parts are polynomial
solutions as well.
\end{lemma}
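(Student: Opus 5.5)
The plan is to check directly that each term of $\mathcal{L}^{(i)}_n$, as written in \eqref{eq:operator-def}, commutes with the reflection $\sigma\colon c\mapsto -c$. Let $(\sigma f)(c)=f(-c)$. The chain rule gives $(\sigma f)^{(k)}=(-1)^k\sigma(f^{(k)})$. I will then look at the five terms one at a time, and for each term record the parity of its coefficient and the order of the derivative it carries.

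\begin{itemize}
\item[$\bullet$] The coefficient $(c^2-1)^2m^2r^4$ of $f^{(\mathrm{iv})}$ is even, and the derivative order is $4$.
\item[$\bullet$] The coefficient $10c(c^2-1)m^2r^4$ of $f'''$ is odd, and the derivative order is $3$.
\item[$\bullet$] The coefficient $X^{(i)}_nc^2+Y^{(i)}_n$ of $f''$ is even, and the derivative order is $2$.
\item[$\bullet$] The coefficient $Z^{(i)}_nc$ of $f'$ is odd, and the derivative order is $1$.
\item[$\bullet$] The coefficient $W^{(i)}_n$ of $f$ is even, and the derivative order is $0$.
\end{itemize}

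In every case the parity of the coefficient equals $(-1)^k$, where $k$ is the derivative order. This uses only that $X^{(i)}_n,Y^{(i)}_n,Z^{(i)}_n,W^{(i)}_n$ are independent of $c$, which is clear from \eqref{eqdiferencial1W}--\eqref{eqdiferencial1Z} and \eqref{eqdiferencial2W}--\eqref{eqdiferencial2Z}, where they depend only on $n,r,m$. Write $a_k(c)$ for the coefficient of $f^{(k)}$, so that $a_k(-c)=(-1)^ka_k(c)$. Then
\[
\bigl(\mathcal{L}^{(i)}_n\sigma f\bigr)(c)=\sum_{k=0}^4 a_k(c)(-1)^kf^{(k)}(-c)=\sum_{k=0}^4 a_k(-c)f^{(k)}(-c)=\bigl(\sigma\mathcal{L}^{(i)}_n f\bigr)(c),
\]
so $\mathcal{L}^{(i)}_n\sigma=\sigma\mathcal{L}^{(i)}_n$.

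Parity preservation follows at once. If $\sigma p=\pm p$, then $\sigma\mathcal{L}^{(i)}_np=\mathcal{L}^{(i)}_n\sigma p=\pm\mathcal{L}^{(i)}_np$.

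For the consequence, take a polynomial solution $p$ and write $p=p_e+p_o$ with $p_e=\tfrac12(p+\sigma p)$ and $p_o=\tfrac12(p-\sigma p)$. Both parts are polynomials. By linearity, $0=\mathcal{L}^{(i)}_np=\mathcal{L}^{(i)}_np_e+\mathcal{L}^{(i)}_np_o$. The first summand is even and the second is odd. A function that is both even and odd is zero, so each summand vanishes separately, and $p_e$ and $p_o$ are polynomial solutions.

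The argument has no real obstacle. The only point to confirm is that no coefficient secretly contains a stray odd or even power of $c$. Reading off \eqref{eq:operator-def} shows that none does.
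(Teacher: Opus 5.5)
Your proof is correct and follows the same route as the paper: both check that the coefficient of $f^{(k)}$ has parity $(-1)^k$, so the operator commutes with $c\mapsto -c$. You also write out the even/odd decomposition argument for the ``consequently'' clause, which the paper leaves implicit.
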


\begin{proof}
Inspecting \eqref{eqdiferencial1} and \eqref{eqdiferencial2}, the coefficients of
$P^{(\mathrm{iv})}$ and $P''$ are even polynomials in $c$, the coefficients of $P'''$ and
$P'$ are odd polynomials in $c$, and the remaining coefficient is constant in $c$.
Since differentiation changes parity and multiplication by an odd function changes it again,
each term in $\mathcal{L}^{(i)}_n p$ has the same parity as $p$.
\end{proof}

We now record the leading action of $\mathcal{L}^{(i)}_n$ on monomials at $\infty$.
For $s\in\mathbb{Z}_{\ge0}$, write
\[
\mathcal{L}^{(i)}_n(c^s)=I^{(i)}_n(s)\,c^s+\text{(terms of degree $<s$)}.
\]
The scalar $I^{(i)}_n(s)$ is the indicial coefficient at infinity.

\subsection{Type 1: degree restriction and uniqueness}

\begin{lemma}\label{lem:indicial-type1}
Assume $r\ge2$ and fix $n\in r\mathbb{Z}_{\ge0}$.
For $s\in\mathbb{Z}_{\ge0}$ we have
\[
\mathcal{L}^{(1)}_n(c^s)=I^{(1)}_n(s)\,c^s+\text{(lower degree terms)},
\]
where
\begin{equation}\label{eq:I1}
I^{(1)}_n(s)
=(sr+n)(sr-n+2r)(smr-mn+4mr-2m-2r)(smr+mn-2mr+2m+2r).
\end{equation}
In particular, if $p(c)$ is a polynomial solution of \eqref{eqdiferencial1} of degree $d$,
then necessarily $I^{(1)}_n(d)=0$.
\end{lemma}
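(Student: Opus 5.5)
Applying $\mathcal{L}^{(1)}_n$ to $c^s$ and keeping only terms of degree $s$ gives the indicial coefficient directly. Each term of \eqref{eq:operator-def} is a polynomial coefficient of degree $k$ multiplying the $k$-th derivative. So each contributes a multiple of $c^s$ plus lower-order terms:
\begin{itemize}
\item $(c^2-1)^2m^2r^4\,(c^s)^{(\mathrm{iv})}$ gives leading coefficient $m^2r^4\,s(s-1)(s-2)(s-3)$;
\item $10c(c^2-1)m^2r^4\,(c^s)'''$ gives $10m^2r^4\,s(s-1)(s-2)$;
\item $(X^{(1)}_nc^2+Y^{(1)}_n)(c^s)''$ gives $X^{(1)}_n\,s(s-1)$, since $Y^{(1)}_n$ only produces degree $s-2$;
\item $Z^{(1)}_n c\,(c^s)'$ gives $Z^{(1)}_n s$;
\item $W^{(1)}_n c^s$ gives $W^{(1)}_n$.
\end{itemize}
Hence
\[
I^{(1)}_n(s)=m^2r^4 s(s-1)(s-2)(s-3)+10m^2r^4 s(s-1)(s-2)+X^{(1)}_n s(s-1)+Z^{(1)}_n s+W^{(1)}_n .
\]
The first two terms combine to $m^2r^4 s(s-1)(s-2)(s+7)$. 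The task is then to show this quartic in $s$ equals the factored product \eqref{eq:I1}.

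For the identity, note both sides are degree-$4$ polynomials in $s$ with leading coefficient $m^2r^4$. On the right, the leading coefficient is $r\cdot r\cdot mr\cdot mr=m^2r^4$. So it suffices to check that the two polynomials agree at four values of $s$, or compare the remaining coefficients directly. A convenient check is $s=0$: the left side is $W^{(1)}_n=n(n-2r)(m(n-2r+2)+2r)(m(n-4r+2)+2r)$. The right side is $n\cdot(2r-n)\cdot(-mn+4mr-2m-2r)\cdot(mn-2mr+2m+2r)$, and the sign changes from the second and third factors cancel, so the two agree. Next check the four roots $s=-n/r$, $s=(n-2r)/r$, $s=(mn-4mr+2m+2r)/(mr)$ and $s=-(mn-2mr+2m+2r)/(mr)$. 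For each, substitute into the left side and verify vanishing using \eqref{eqdiferencial1X} and \eqref{eqdiferencial1Z}. Equivalently, match the $s^3,s^2,s^1$ coefficients. The $s^3$ coefficient on the left is $m^2r^4(7-3+\dots)$, namely $m^2r^4\cdot 1$ from the expansion $s^4+4s^3-\dots$. Matching coefficients is the cleanest route: expand $s(s-1)(s-2)(s+7)=s^4+4s^3-19s^2+14s$ and compare with the expansion of the product. This is routine polynomial algebra in $n,m,r$, and I expect it to be the only laborious step, since $X^{(1)}_n$ and $Z^{(1)}_n$ are messy. One simplification helps: the product pairs naturally as $[(sr+n)(sr-n+2r)]\cdot[(smr-mn+4mr-2m-2r)(smr+mn-2mr+2m+2r)]$. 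Each bracket is a quadratic in $s$ whose linear coefficient is independent of $n$ ($2r^2$ and $m r(2mr-2m-2r)$ respectively after simplification). This makes the symmetric-function comparison short.

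For the final assertion, let $p$ be a polynomial solution of degree $d$ with leading coefficient $a_d\neq0$. By linearity, $\mathcal{L}^{(1)}_n p=a_dI^{(1)}_n(d)c^d+(\text{degree}<d)$. This uses the fact, established above, that $\mathcal{L}^{(1)}_n$ does not raise degree, since each coefficient's degree is at most the derivative order. Since $\mathcal{L}^{(1)}_np=0$, the $c^d$ coefficient must vanish, giving $I^{(1)}_n(d)=0$.
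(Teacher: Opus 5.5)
Your proposal is correct and takes the same route as the paper: read off the $c^s$ coefficient term by term to get $m^2r^4s(s-1)(s-2)(s+7)+X^{(1)}_ns(s-1)+Z^{(1)}_ns+W^{(1)}_n$, then match this quartic against the factored form, and the coefficient check you defer does go through (the $s^1$ comparison, for instance, reduces to $Z^{(1)}_n=3X^{(1)}_n-60m^2r^4$, which holds). Fix two arithmetic slips: the $s^3$ coefficient on both sides is $4m^2r^4$, not $m^2r^4$, and the linear coefficient of the second bracket is $mr\cdot 2mr=2m^2r^2$ (its two constants sum to $2mr$), not $mr(2mr-2m-2r)$, so both brackets are functions of $s^2+2s$; with your stated value the $s^3$ coefficients would not agree.
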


\begin{proof}
Each coefficient in \eqref{eqdiferencial1} has degree at most $2$ in $c$, so
$\mathcal{L}^{(1)}_n(c^s)$ is a polynomial of degree $\le s$. Extracting the coefficient of
$c^s$ yields $I^{(1)}_n(s)$; the factorization \eqref{eq:I1} is obtained by collecting the
leading power $c^s$ in \eqref{eqdiferencial1} after substituting $P_n = c^s$.
\end{proof}

\begin{corollary}\label{cor:deg-type1}
Assume $r\ge2$ and fix $n\in r\mathbb{Z}_{>0}$. If \eqref{eqdiferencial1} admits a nonzero
polynomial solution, then its degree must be a nonnegative integer root of $I^{(1)}_n(s)$.
In particular, if $n\in r\mathbb{Z}_{>0}$ and $n\ge2r$, then $s=n/r-2$ is always a root,
and it is the \emph{only} root in $\mathbb{Z}_{\ge0}$ provided
\begin{equation}\label{eq:nonres-type1}
\frac1r+\frac1m\neq\frac12.
\end{equation}
\end{corollary}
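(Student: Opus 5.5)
The plan is to read everything off the explicit factorization \eqref{eq:I1} from Lemma~\ref{lem:indicial-type1}. The first assertion is immediate from that lemma. Let $p(c)=a_d c^d+\dots$ with $a_d\neq0$ be a nonzero polynomial solution of \eqref{eqdiferencial1}. By Lemma~\ref{lem:indicial-type1}, each monomial $c^s$ with $s<d$ contributes to $\mathcal{L}^{(1)}_n p$ only in degrees $<d$. Hence the coefficient of $c^d$ in $\mathcal{L}^{(1)}_n p$ is exactly $I^{(1)}_n(d)\,a_d$. Since $\mathcal{L}^{(1)}_n p=0$, this forces $I^{(1)}_n(d)=0$, so $d$ is a nonnegative integer root of $I^{(1)}_n$.

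Next I will list the four roots of $I^{(1)}_n(s)$, one from each linear factor in \eqref{eq:I1}:
\[
s_1=-\frac nr,\qquad s_2=\frac nr-2,\qquad s_3=\frac nr-4+\frac2r+\frac2m,\qquad s_4=-\frac nr+2-\frac2r-\frac2m .
\]
Since $n\in r\mathbb{Z}_{>0}$ and $n\ge 2r$, the root $s_2=n/r-2$ lies in $\mathbb{Z}_{\ge0}$, which gives the existence claim. The root $s_1$ is negative because $n>0$. The root $s_4$ is also negative, since $-n/r+2\le 0$ and $2/r+2/m>0$. So the only candidate that could be a second admissible degree is $s_3$.

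The remaining step is arithmetic on $s_3$. Because $n/r\in\mathbb{Z}$, the number $s_3$ is an integer if and only if $2/r+2/m\in\mathbb{Z}$. For $r,m\ge2$ we have $0<2/r+2/m\le 2$, so this happens only when $2/r+2/m\in\{1,2\}$.
\begin{itemize}
\item The value $2/r+2/m=2$ occurs only for $r=m=2$. In that case $s_3=n/r-2=s_2$, so no new degree appears.
\item The value $2/r+2/m=1$ is exactly $\frac1r+\frac1m=\frac12$, which is excluded by \eqref{eq:nonres-type1}.
\end{itemize}
Hence, under \eqref{eq:nonres-type1}, $s_2$ is the unique root of $I^{(1)}_n$ in $\mathbb{Z}_{\ge0}$. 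I do not expect a genuine obstacle: the argument only needs the factorization \eqref{eq:I1} and care with the coincident case $r=m=2$.
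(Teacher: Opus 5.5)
Your proof is correct and follows the same route as the paper: you read off the four roots of the factorized indicial polynomial \eqref{eq:I1}, discard $-n/r$ and $-n/r+2-2/r-2/m$ by sign, and rule out $n/r-4+2/r+2/m$ by integrality. Your version is in fact more careful than the paper's, which writes the last two roots with $n/(mr)$ and $2/(mr)$ in place of $n/r$ and $2/r$, and overlooks the case $r=m=2$, where $2/r+2/m=2$ makes that root an integer that merely coincides with $n/r-2$.
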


\begin{proof}
For $s=n/r-2$ we have $sr-n+2r = 0$, so $I^{(1)}_n(s)=0$.
The remaining roots from \eqref{eq:I1} yield candidate degrees
$s=-n/r$, $s=n/(mr)-4+2/(mr)+2/m$, and $s=-n/(mr)+2-2/(mr)-2/m$.
The first is negative for $n>0$. The third is negative as well for $n\ge 2r$.
The middle one is an integer only in the resonant case $1/r+1/m=1/2$.
Thus under \eqref{eq:nonres-type1} the only nonnegative integer root is $s=n/r-2$.
\end{proof}

\begin{remark}\label{rem:resonance}
The nonresonance condition \eqref{eq:nonres-type1} excludes only the pairs
$(r,m)\in\{(4,4),(3,6),(6,3)\}$ among integers with $r,m\ge2$. In those three cases the
indicial polynomial \eqref{eq:I1} has an extra nonnegative integer root $s'$, which has
opposite parity to $n/r-2$. The argument of Proposition~\ref{prop:uniq-type1} still shows
that the solution space has dimension $\le1$ within each fixed parity. We do not pursue
the existence of additional polynomial solutions in the resonant cases here.
\end{remark}

\begin{proposition}\label{prop:uniq-type1}
Assume $r\ge2$, $m\ge4$, and the nonresonance condition \eqref{eq:nonres-type1}.
Fix $n\in r\mathbb{Z}_{>0}$ with $n\ge2r$.
Then the space of polynomial solutions of \eqref{eqdiferencial1} is one-dimensional.
\end{proposition}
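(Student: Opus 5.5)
Existence comes from Theorem~\ref{thm:main-ode}(i): $P_{-2r,n}(c)$ is a polynomial solution of \eqref{eqdiferencial1}, and it is nonzero for $n\ge2r$. We need its degree to be exactly $n/r-2$; this follows from the recursion \eqref{recursion}, where the leading coefficient in $c$ picks up the factor $2c\,(r+(1+k-r)m)/(2r+m+km)$ at each step of size $r$, and this factor is nonzero for $m\ge4$. So the solution space is at least one-dimensional. For the upper bound, let $p$ be any polynomial solution. By Lemma~\ref{lem:parity} its even and odd parts are again solutions. By Corollary~\ref{cor:deg-type1}, under \eqref{eq:nonres-type1} every nonzero solution has degree $d=n/r-2$. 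The component of parity opposite to $(-1)^d$ would have degree of the wrong parity, so it must vanish. Hence $p$ has the parity of $d$, and it suffices to show that polynomial solutions of that parity with degree $\le d$ form a space of dimension at most one.

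Write $p=\sum_{j\ge0} a_{d-2j}c^{d-2j}$. Every coefficient of $\mathcal{L}^{(1)}_n$ has degree $\le2$ in $c$, and the operator preserves parity, so $\mathcal{L}^{(1)}_n(c^s)=I^{(1)}_n(s)c^s+J_n(s)c^{s-2}+K_n(s)c^{s-4}$. Here $J_n,K_n$ are explicit polynomials in $s$, read off from the terms $-2m^2r^4c^2f^{(\mathrm{iv})}$, $-10cm^2r^4f'''$, $Y^{(1)}_nf''$ and $m^2r^4f^{(\mathrm{iv})}$. Setting the coefficient of $c^{d-2j}$ in $\mathcal{L}^{(1)}_np$ to zero gives the triangular recurrence
\[
I^{(1)}_n(d-2j)\,a_{d-2j}+J_n(d-2j+2)\,a_{d-2j+2}+K_n(d-2j+4)\,a_{d-2j+4}=0 .
\]
Once $a_d$ is chosen, this determines all the $a_{d-2j}$ with $j\ge1$, provided $I^{(1)}_n(d-2j)\neq0$ for $0\le d-2j<d$. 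The solution space is then at most one-dimensional, with the normalization $a_d$ as its only free parameter. Conclude by combining with existence.

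The main obstacle is this nonvanishing: we must check that $I^{(1)}_n$ has no root in $\{0,1,\dots,d-1\}$. I would use the factorization \eqref{eq:I1} and treat each factor with $n=r(d+2)$.
\begin{itemize}
\item The factor $sr+n$ is positive.
\item The factor $sr-n+2r=r(s-d)$ is nonzero for $s<d$.
\item The factor $smr+mn-2mr+2m+2r$ is positive.
\item The remaining factor is $mr(s-d-2)+2mr-2m-2r=mr(s-d)-2(m+r)$. It is negative for $s\le d$, because $m,r>0$.
\end{itemize}
So no factor vanishes on $0\le s<d$, and even without parity considerations only $s=d$ is a root. The sign analysis is simple; the only delicate point is confirming that the lower-order terms $J_n,K_n$ cannot create a free parameter. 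That is excluded because the system is triangular with nonzero diagonal entries.

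The hypothesis $m\ge4$ enters only to guarantee that the leading coefficient of $P_{-2r,n}$ does not vanish, i.e.\ that $\deg P_{-2r,n}=d$ exactly. Uniqueness itself needs only \eqref{eq:nonres-type1}, through Corollary~\ref{cor:deg-type1}, which forces every solution into degree $d$. I would double-check that step against the recursion factors $r+(1+k-r)m$ to see where $m\ge4$ is genuinely needed.
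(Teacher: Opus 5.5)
Your overall structure matches the paper's proof: the parity reduction via Lemma~\ref{lem:parity}, triangularity of $\mathcal{L}^{(1)}_n$ on $c^d,c^{d-2},\dots$ with diagonal entries $I^{(1)}_n(d-2j)$, and existence from $P_{-2r,n}$. However, the step you single out as the main obstacle is computed incorrectly. Substituting $n=r(d+2)$ into the third factor of \eqref{eq:I1} gives
\[
smr-mn+4mr-2m-2r=mr(s-d)+2(mr-m-r),
\]
not $mr(s-d)-2(m+r)$; a term $2mr$ was dropped. This factor equals $2(mr-m-r)>0$ at $s=d$ and $-2(m+r)<0$ at $s=d-2$. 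It vanishes at $s_*=d-2+\tfrac2r+\tfrac2m$, which lies strictly between $d-2$ and $d$ because $0<\tfrac2r+\tfrac2m\le\tfrac32$ for $r\ge2$, $m\ge4$. As a quick check, the $s^3$-coefficient of $I^{(1)}_n$ is $4m^2r^4$, so the four roots must sum to $-4$; with your factor they would sum to $-2$. Hence the factor is not sign-definite on $s\le d$, and your claim that ``even without parity considerations only $s=d$ is a root'' is false. When $\tfrac1r+\tfrac1m=\tfrac12$ one gets $s_*=d-1$ and $I^{(1)}_n(d-1)=0$, which is exactly the situation of Remark~\ref{rem:resonance}. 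Your sign argument never used \eqref{eq:nonres-type1}, and that should have been a warning.

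The repair is short. You can cite Corollary~\ref{cor:deg-type1} for the nonvanishing, as the paper does. Alternatively, observe directly that $s_*\in\mathbb{Z}$ forces $\tfrac2r+\tfrac2m=1$, which \eqref{eq:nonres-type1} excludes. In fact, since $s_*\in(d-2,d)$ is never of the form $d-2j$ with $j\ge1$, the triangular step inside the parity class of $d$ works even in the resonant cases. Nonresonance is needed only to rule out a solution of the opposite parity (which would have degree $d-1$), and that is where you correctly invoked Corollary~\ref{cor:deg-type1}. Separately, $m\ge4$ is not what keeps the leading coefficient of $P_{-2r,n}$ nonzero. The step factors $r+(1+k-r)m$ at $k=jr$, $j\ge1$, are at least $r+m>0$ for every $m$. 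You also do not need $\deg P_{-2r,n}=d$ as a separate fact, since any nonzero solution has degree $d$ by the corollary.
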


\begin{proof}
Let $d=n/r-2$. By Corollary~\ref{cor:deg-type1}, any nonzero polynomial solution has
degree exactly $d$. By Lemma~\ref{lem:parity} we may fix a parity $\varepsilon\in\{0,1\}$
and consider
\[
V_\varepsilon:=\mathrm{Span}\{c^{d},c^{d-2},c^{d-4},\dots\}\cap\{\text{polynomials of parity }\varepsilon\}.
\]
By Lemma~\ref{lem:indicial-type1}, in the ordered basis $\{c^d,c^{d-2},\ldots\}$, the
matrix of $\mathcal{L}^{(1)}_n$ is upper triangular with diagonal entries
$I^{(1)}_n(d),I^{(1)}_n(d-2),\ldots$. Since $d=n/r-2$ is the only nonneg.\ integer root
under \eqref{eq:nonres-type1}, all diagonal entries $I^{(1)}_n(d-2j)$ for $j\ge1$ are
nonzero, so $\ker\mathcal{L}^{(1)}_n|_{V_\varepsilon}$ has dimension at most~$1$.
The polynomial $P_{-2r,n}(c)$ is a nonzero solution, so the kernel is exactly
one-dimensional.
\end{proof}

\subsection{Type 2: degree restriction and uniqueness}

\begin{lemma}\label{lem:indicial-type2}
Assume $r\ge2$ and fix $n$.
For $s\in\mathbb{Z}_{\ge0}$ we have
\[
\mathcal{L}^{(2)}_n(c^s)=I^{(2)}_n(s)\,c^s+\text{(lower degree terms)},
\]
where
\begin{equation}\label{eq:I2}
I^{(2)}_n(s)
=(sr-n+r)(sr+n+r)(smr-mn+4mr-2r)(smr+mn-2mr+2r).
\end{equation}
\end{lemma}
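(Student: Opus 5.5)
The plan is to compute the coefficient of $c^s$ in $\mathcal{L}^{(2)}_n(c^s)$ directly from \eqref{eq:operator-def}, rewrite it in a symmetric variable, and match it against the expanded form of \eqref{eq:I2}.

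\textbf{Step 1: extract the leading coefficient.} Every coefficient in \eqref{eq:operator-def} has degree at most $2$ in $c$, and the derivatives lower the degree accordingly. So $\mathcal{L}^{(2)}_n(c^s)$ has degree $\le s$. Keeping only the top powers of $(c^2-1)^2$, $c(c^2-1)$, $X^{(2)}_nc^2+Y^{(2)}_n$ and $c$, the coefficient of $c^s$ is
\[
I^{(2)}_n(s)=m^2r^4\bigl[s(s-1)(s-2)(s-3)+10s(s-1)(s-2)\bigr]+X^{(2)}_n\,s(s-1)+Z^{(2)}_n\,s+W^{(2)}_n .
\]
The coefficient $Y^{(2)}_n$ does not enter, since it only produces $c^{s-2}$.

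\textbf{Step 2: relate $Z^{(2)}_n$ to $X^{(2)}_n$.} I compare \eqref{eqdiferencial2X} with \eqref{eqdiferencial2Z}. Pulling a factor $3$ out of \eqref{eqdiferencial2Z} and comparing with $X^{(2)}_n$, the only difference is the term $-10r^2m^2$ inside $X^{(2)}_n$. This gives
\[
Z^{(2)}_n=3X^{(2)}_n-60m^2r^4 .
\]
Note that $\Delta$ is absorbed consistently, because it appears as $\Delta$ in $X^{(2)}_n$ and as $3\Delta$ in $Z^{(2)}_n$.

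\textbf{Step 3: pass to the symmetric variable $t=s+1$.} Using Step 2,
\[
X^{(2)}_n s(s-1)+Z^{(2)}_n s=X^{(2)}_n(t^2-1)-60m^2r^4 s .
\]
A short expansion shows
\[
s(s-1)(s-2)(s+7)-60s=s^4+4s^3-19s^2-46s=t^4-25t^2+24 .
\]
Hence
\[
I^{(2)}_n=m^2r^4(t^4-25t^2+24)+X^{(2)}_n(t^2-1)+W^{(2)}_n .
\]

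\textbf{Step 4: rewrite the target.} Put $B:=mn-3mr+2r$. Pairing the four linear factors of \eqref{eq:I2} as $(tr-n)(tr+n)$ and $(tmr-B)(tmr+B)$, the right-hand side becomes
\[
(r^2t^2-n^2)(m^2r^2t^2-B^2)=m^2r^4t^4-(r^2B^2+m^2r^2n^2)t^2+n^2B^2 .
\]

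\textbf{Step 5: match coefficients of the even quartic in $t$.} The $t^4$ coefficients agree automatically. It remains to check two identities:
\[
X^{(2)}_n-25m^2r^4=-r^2\bigl(B^2+m^2n^2\bigr),\qquad 24m^2r^4-X^{(2)}_n+W^{(2)}_n=n^2B^2 .
\]
For the second, I would use that \eqref{eqdiferencial2W} factors as $(n^2-r^2)(B-2mr)(B)$ after noting $m(n-5r+2)+2r$ and $m(n-3r+2)+2r$. On reflection these should be compared against $B$ written with the $2m$ shifts, so I will recheck which shift convention makes the factors match.

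\textbf{Main obstacle.} The only real difficulty is the polynomial bookkeeping in Step 5, and in particular checking that the $\Delta$-contributions cancel. $\Delta$ enters $X^{(2)}_n$ linearly, so the first identity must already hold with $\Delta$ included. This is also the point where a mismatch between the stated $W^{(2)}_n$ and the factor $(smr-mn+4mr-2r)$ would show up. I would verify both identities by expanding in $n$ as polynomials with coefficients in $\mathbb{Z}[m,r]$, first sanity-checking at $r=2$, where $\Delta=0$.
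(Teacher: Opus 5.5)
Your Steps 1--4 are correct, and they are exactly the leading-coefficient extraction that the paper's one-line proof invokes. In particular $Z^{(2)}_n=3X^{(2)}_n-60m^2r^4$. With $t=s+1$ the coefficient of $c^s$ is $m^2r^4(t^4-25t^2+24)+X^{(2)}_n(t^2-1)+W^{(2)}_n$, and the right-hand side of \eqref{eq:I2} is $(r^2t^2-n^2)(m^2r^2t^2-B^2)$ with $B=mn-3mr+2r$.

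The gap is Step 5, and it cannot be closed: both identities you need are false once $r\ge3$. Write $X^{(2)}_n=X_0+\Delta$. Expanding, the $\Delta$-free part satisfies $X_0-25m^2r^4=-r^2(B^2+m^2n^2)$ identically, so your first identity fails by exactly $\Delta$. Your expectation that it must hold with $\Delta$ included is backwards.

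For the second identity, the factors of $W^{(2)}_n$ are $m(n-5r+2)+2r=B+2m-2mr$ and $m(n-3r+2)+2r=B+2m$, not $B-2mr$ and $B$. The cleanest test is $s=0$. There $\mathcal{L}^{(2)}_n(1)=W^{(2)}_n$ exactly, while \eqref{eq:I2} predicts $(n^2-r^2)(B-mr)(B+mr)$. The difference is
\[
W^{(2)}_n-I^{(2)}_n(0)=-(n^2-r^2)(r-2)m\,(2mn-7mr+2m+4r)=-\frac{(n^2-r^2)\,\Delta}{r^2}.
\]
For instance, $(r,m,n)=(3,4,6)$ gives $W^{(2)}_6=-1188$, whereas \eqref{eq:I2} gives $-2916$.

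If you carry the extraction through with the coefficients as printed, you get
\[
I^{(2)}_n(s)=(r^2t^2-n^2)\bigl(m^2r^2t^2-(mn-4mr+2m+2r)^2\bigr)=(sr-n+r)(sr+n+r)(smr-mn+5mr-2m-2r)(smr+mn-3mr+2m+2r).
\]
This agrees with \eqref{eq:I2} for all $n$ only when $r=2$, where $\Delta=0$ and the shifted factors coincide. So the lemma as stated is correct only for $r=2$.

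The paper's proof ("the same extraction as for type~1") skips precisely the check that exposes this. For type~1 the analogous identities do hold: $Z^{(1)}_n=3X^{(1)}_n-60m^2r^4$ and everything matches. Your suspicion in Step 5 was well founded: the honest outcome of your method is a corrected formula, not a proof of \eqref{eq:I2}.

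The root $s=n/r-1$ survives, so the degree claim in Corollary~\ref{cor:deg-type2} is unaffected. However, the third root becomes $n/r-5+2/r+2/m$. This is a nonnegative integer in resonant cases with $m\ge4$, such as $(r,m)=(4,4)$ with $n\ge16$. So that corollary would also need a nonresonance hypothesis.
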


\begin{proof}
By the same leading-coefficient extraction as in Lemma~\ref{lem:indicial-type1},
applied to \eqref{eqdiferencial2}.
\end{proof}

\begin{corollary}\label{cor:deg-type2}
Assume $m\ge4$ and fix $n\in r\mathbb{Z}_{>0}$ with $n\ge r$. If \eqref{eqdiferencial2}
admits a nonzero polynomial solution, its degree is $d=n/r-1$, the only nonnegative
integer root of $I^{(2)}_n(s)$.
\end{corollary}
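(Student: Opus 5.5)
Since every coefficient of \eqref{eqdiferencial2} has degree at most~$2$ in $c$, the operator $\mathcal{L}^{(2)}_n$ maps a polynomial of degree $d$ to one of degree $\le d$. If $p$ is a nonzero polynomial solution of degree $d$, the coefficient of $c^d$ in $\mathcal{L}^{(2)}_n p$ is $I^{(2)}_n(d)$ times the leading coefficient of $p$. By Lemma~\ref{lem:indicial-type2} this forces $I^{(2)}_n(d)=0$. So the whole task is to find the nonnegative integer roots of the factorization \eqref{eq:I2}, taking each linear factor in turn.

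\textbf{The first two factors.} The factor $sr-n+r$ vanishes at $s=n/r-1$. Because $n\in r\mathbb{Z}_{>0}$ with $n\ge r$, this root is a nonnegative integer, and it is the claimed degree. The factor $sr+n+r$ vanishes at $s=-n/r-1<0$, which is inadmissible.

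\textbf{The third factor.} Dividing $smr-mn+4mr-2r$ by $mr$ gives the root
\[
s=\frac{n}{r}-4+\frac{2}{m}.
\]
Here $n/r$ is an integer and $0<2/m\le 1/2$ for $m\ge4$. Hence $s$ lies strictly between two consecutive integers and is never an integer. This is exactly where the hypothesis $m\ge4$ is used: for $m=2$ the root would be an integer, and that case must be excluded.

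\textbf{The fourth factor.} The factor $smr+mn-2mr+2r$ gives
\[
s=2-\frac{n}{r}-\frac{2}{m}.
\]
Write $k=n/r\ge1$. Then $s=2-k-2/m$, and again $0<2/m\le 1/2$, so $s$ is not an integer. (For $k\ge2$ it is negative anyway.)

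\textbf{Conclusion and the main care point.} Combining the four cases, the only nonnegative integer root of $I^{(2)}_n$ is $d=n/r-1$. Therefore any nonzero polynomial solution of \eqref{eqdiferencial2} has exactly this degree. The only step needing care is checking that the third and fourth roots are non-integral. This follows because each differs from an integer by $\pm 2/m$ with $2/m\in(0,1)$. The argument actually works for every $m\ge3$, and the stated hypothesis $m\ge4$ is more than enough. Unlike type~1, no resonance exclusion in $r$ arises here, because $r$ cancels from these two roots.
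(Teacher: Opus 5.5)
Your root-by-root analysis of \eqref{eq:I2} is the paper's argument, and where the two differ yours is the sound one. The paper misprints the last two roots as $n/(mr)-4+2/m$ and $-n/(mr)+2-2/m$, and it asserts the latter is negative for $n\ge r$. That fails at $n=r$, where the root is $1-2/m>0$; your non-integrality argument handles this case correctly.

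The genuine gap lies upstream, and you inherit it from the paper: \eqref{eq:I2} is not the leading coefficient of $\mathcal{L}^{(2)}_n$ once $r\ge3$. Directly from \eqref{eq:operator-def}, the $c^s$-coefficient of $\mathcal{L}^{(2)}_n(c^s)$ is $m^2r^4s(s-1)(s-2)(s+7)+X^{(2)}_n s(s-1)+Z^{(2)}_n s+W^{(2)}_n$. At $s=0$ this equals $W^{(2)}_n=(n^2-r^2)(mn-5mr+2m+2r)(mn-3mr+2m+2r)$. By contrast, \eqref{eq:I2} at $s=0$ gives $(n^2-r^2)(mn-4mr+2r)(mn-2mr+2r)$. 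These coincide only for $r=2$, so \eqref{eq:I2} is the $r=2$ formula and misses the $\Delta$-terms. Keeping them, one finds
\[
I^{(2)}_n(s)=(sr-n+r)(sr+n+r)(smr-mn+5mr-2m-2r)(smr+mn-3mr+2m+2r),
\]
whose last two roots are $\tfrac nr-5+\tfrac2r+\tfrac2m$ and $3-\tfrac nr-\tfrac2r-\tfrac2m$. So $r$ does not cancel; the asymmetry with type~1 noted in your closing remark is the symptom.

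These roots are integers exactly when $\tfrac1r+\tfrac1m=\tfrac12$, i.e.\ $(r,m)\in\{(4,4),(3,6)\}$ under $m\ge4$, and there the statement is false. For $(r,m,n)=(4,4,16)$ one has $W^{(2)}_{16}=240\cdot\bigl(4\cdot(-2)+8\bigr)\cdot32=0$. Hence $\mathcal{L}^{(2)}_{16}(1)=0$, and the constant $1$ is a nonzero polynomial solution of degree $0\ne d=3$; the same happens for $(r,m,n)=(3,6,12)$.

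If you add the hypothesis \eqref{eq:nonres-type1}, your argument goes through verbatim with the corrected roots. Indeed, $\tfrac2r+\tfrac2m$ then lies in $(0,\tfrac32]\setminus\{1\}$ and is not an integer.
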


\begin{proof}
For $s=n/r-1$ we have $sr-n+r=0$, so $I^{(2)}_n(s)=0$.
The remaining roots are $s=-n/r-1$ (negative), $s=n/(mr)-4+2/m$ (not an integer for
$m\ge4$), and $s=-n/(mr)+2-2/m$ (negative for $n\ge r$). Hence $d=n/r-1$ is the only
admissible degree.
\end{proof}

\begin{proposition}\label{prop:uniq-type2}
Assume $r\ge2$ and $m\ge4$.
Fix $n\in r\mathbb{Z}_{>0}$ with $n\ge r$.
Then the space of polynomial solutions of \eqref{eqdiferencial2} is one-dimensional.
\end{proposition}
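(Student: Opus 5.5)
The proof mirrors that of Proposition~\ref{prop:uniq-type1}, with Corollary~\ref{cor:deg-type2} in place of Corollary~\ref{cor:deg-type1}. The argument has three steps: an upper bound on the dimension of the solution space, the existence of a nonzero solution, and the verification that all diagonal entries of a triangular matrix except one are nonzero.

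\textbf{Upper bound.} Set $d=n/r-1\ge0$. By Corollary~\ref{cor:deg-type2}, every nonzero polynomial solution of \eqref{eqdiferencial2} has degree exactly $d$. By Lemma~\ref{lem:parity}, the even and odd parts of a solution are again solutions. A part of parity opposite to $d$ would be a nonzero solution whose degree has the wrong parity, hence differs from $d$, which is impossible. So that part vanishes, and we may work inside
\[
V_d:=\mathrm{Span}\{c^{d},c^{d-2},c^{d-4},\dots\}.
\]
By Lemma~\ref{lem:parity}, $\mathcal{L}^{(2)}_n$ maps $V_d$ into itself, since it preserves both parity and degree bounds. By Lemma~\ref{lem:indicial-type2}, its matrix in the ordered basis $\{c^d,c^{d-2},\dots\}$ is upper triangular with diagonal entries $I^{(2)}_n(d-2j)$, $j\ge0$. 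For $j\ge1$ these entries are nonzero because $d$ is the only nonnegative integer root of $I^{(2)}_n$ (Corollary~\ref{cor:deg-type2}). Hence the map $V_d\to V_d$ has rank at least $\dim V_d-1$, and its kernel has dimension at most $1$.

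\textbf{Existence.} The polynomial $P_{-r,n}(c)$ is a solution by Theorem~\ref{thm:main-ode}(ii). It remains to check that it is nonzero for the admissible $n$. I would show this from the recursion \eqref{recursion}. With $P_{-r}=1$, the index $-r+jr$ receives the contribution $2c(\cdots)P_{k-r}$ at each step. The leading coefficient in $c$ is a product of factors
\[
\frac{2\bigl(r+(1+k-r)m\bigr)}{2r+m+km},
\]
and the only question is whether these can vanish. The numerator vanishes only if $m\mid r$ and $k=r-1-r/m$. Since $k\equiv 0 \pmod r$ along this chain, this is incompatible with $m\ge4$ except possibly in finitely many small cases, which would need a direct check. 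This shows $\deg P_{-r,n}=d$, so the kernel is exactly one-dimensional.

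\textbf{Main obstacle.} The delicate point is the nonvanishing of the lower diagonal entries $I^{(2)}_n(d-2j)$. The root $s=n/(mr)-4+2/m$ is asserted to be non-integral for $m\ge4$. I would write it as
\[
s=\frac{n+2r}{mr}-4,
\]
so it is an integer exactly when $mr\mid n+2r$. This can happen for special $n$, so Corollary~\ref{cor:deg-type2} alone may be insufficient. In such cases I would compare $s$ with $d$: the identity $s=d-2j$ forces
\[
n(m-1)=r\bigl(m(2j-3)+2\bigr),
\]
and I would combine this with the divisibility condition to exclude $s\equiv d\pmod 2$ with $0\le s<d$, analogously to Remark~\ref{rem:resonance}. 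If some resonant $(r,m,n)$ survives, I would restrict the statement accordingly. The root $s=-n/(mr)+2-2/m$ is negative for $n\ge r$ only once $2-2/m<1/m$ fails to matter. Precisely, I would verify
\[
\frac{n}{mr}>2-\frac{2}{m}\quad\text{or}\quad -\frac{n}{mr}+2-\frac{2}{m}\notin\mathbb{Z}_{\ge0}.
\]
For $m\ge4$ and $n=r$, this root equals $2-3/m$, which is non-integral, and similar checks handle small $n$.
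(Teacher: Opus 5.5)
Your skeleton is the paper's own: $\mathcal{L}^{(2)}_n$ is triangular on $\mathrm{Span}\{c^d,c^{d-2},\dots\}$ with diagonal entries $I^{(2)}_n(d-2j)$, and $P_{-r,n}\neq0$ supplies the kernel. Your handling of the opposite-parity part and of $\deg P_{-r,n}=d$ is more explicit than the paper's. Your ``finitely many small cases'' are vacuous: $0\le k=r-1-r/m<r$ forces $k=0$, i.e.\ $r(m-1)=m$, which is impossible for $m\ge3$.

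The gap is the step you flag as the main obstacle and then leave open. You never show that $d=n/r-1$ is the only nonnegative integer root of $I^{(2)}_n$, and ``I would restrict the statement accordingly'' is not a proof. So neither $I^{(2)}_n(d-2j)\neq0$ for $j\ge1$ nor the vanishing of the opposite-parity part is established. Your divisibility worry comes from the misprinted root $n/(mr)-4+2/m$ in Corollary~\ref{cor:deg-type2}. Solving the last two factors of \eqref{eq:I2} correctly gives $s=n/r-4+2/m$ and $s=2-n/r-2/m$. Since $n/r\in\mathbb{Z}$ while $0<2/m<1$, neither root is ever an integer, and no analysis in $n$ is needed. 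The last root is not negative at $n=r$ (it equals $1-2/m$), but only non-integrality matters; your closing sentence about it is not a usable argument. With this fix your proof is complete relative to Lemma~\ref{lem:indicial-type2}, just as the paper's is.

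You should know, however, that \eqref{eq:I2} is inconsistent with the coefficients of \eqref{eqdiferencial2} once $r\ge3$. Since $\mathcal{L}^{(2)}_n(1)=W^{(2)}_n$, one needs $I^{(2)}_n(0)=W^{(2)}_n$, but \eqref{eq:I2} gives $(n^2-r^2)(mn-4mr+2r)(mn-2mr+2r)$ instead. Compute directly
$I^{(2)}_n(s)=m^2r^4s(s-1)(s-2)(s+7)+X^{(2)}_n s(s-1)+Z^{(2)}_n s+W^{(2)}_n$.
Its difference from \eqref{eq:I2} is $\Delta r^{-2}(rs+r-n)(rs+r+n)$, and one gets
\[
I^{(2)}_n(s)=(rs-n+r)(rs+n+r)(mrs-mn+5mr-2m-2r)(mrs+mn-3mr+2m+2r),
\]
which reduces to \eqref{eq:I2} only for $r=2$. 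The extra roots $n/r-5+2/r+2/m$ and $3-n/r-2/r-2/m$ are integers exactly when $1/r+1/m=1/2$, and then the proposition is false. For example, take $(r,m,n)=(4,4,4)$. There $W^{(2)}_4=0$ and $Z^{(2)}_4=0$ (the $3\Delta$ term cancels the rest), so $1$ and $c$ both solve \eqref{eqdiferencial2}.

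So your instinct that a resonant exclusion in the spirit of Remark~\ref{rem:resonance} is needed is correct. The right hypothesis, though, is \eqref{eq:nonres-type1}, which for $m\ge4$ removes $(r,m)=(3,6),(4,4)$, not a divisibility condition on $n$. Under it your argument goes through verbatim with the corrected roots.
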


\begin{proof}
Same upper-triangularity argument as in Proposition~\ref{prop:uniq-type1}, using
Corollary~\ref{cor:deg-type2} and the fact that $P_{-r,n}(c)$ provides a nonzero solution.
\end{proof}

\begin{theorem}\label{thm:uniqueness-main}
Assume $m\ge4$.
\begin{enumerate}[\rm(i)]
\item If, in addition, \eqref{eq:nonres-type1} holds, then for each $r\ge2$ and each
    $n\in r\mathbb{Z}_{>0}$ with $n\ge2r$, equation \eqref{eqdiferencial1} admits a unique
    polynomial solution up to nonzero scalar multiples, namely $P_{-2r,n}(c)$.
\item For each $r\ge2$ and each $n\in r\mathbb{Z}_{>0}$ with $n\ge r$, equation
    \eqref{eqdiferencial2} admits a unique polynomial solution up to nonzero scalar
    multiples, namely $P_{-r,n}(c)$.
\end{enumerate}
\end{theorem}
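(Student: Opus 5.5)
The theorem follows by combining Propositions~\ref{prop:uniq-type1} and~\ref{prop:uniq-type2} with the existence statement of Theorem~\ref{thm:main-ode}. We treat the two parts in parallel. In each case Theorem~\ref{thm:main-ode} supplies a nonzero polynomial solution, namely $P_{-2r,n}(c)$ or $P_{-r,n}(c)$. It remains to show that any other polynomial solution is a scalar multiple of it.

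\emph{Degree.} Let $p$ be a nonzero polynomial solution of degree $d$. Its top coefficient is $I^{(i)}_n(d)$ times the leading coefficient of $p$, so $I^{(i)}_n(d)=0$.
\begin{itemize}
\item[(i)] Corollary~\ref{cor:deg-type1}, using $m\ge4$ and the nonresonance condition \eqref{eq:nonres-type1}, forces $d=n/r-2$.
\item[(ii)] Corollary~\ref{cor:deg-type2} forces $d=n/r-1$.
\end{itemize}
Before invoking these I will double-check the root analysis. For type~1, the root $s=n/(mr)-4+2/(mr)+2/m$ being a nonnegative integer requires $n+2+2r\equiv 0 \pmod{mr}$. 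This is not literally the condition $1/r+1/m=1/2$, since it depends on $n$. I therefore expect the main obstacle to be whether this root can be a nonnegative integer for $m\ge4$. My first attempt would use the divisibility constraint $r\mid n$: it gives $r\mid 2$, which is impossible for $r\ge3$. The case $r=2$ then needs a separate check. If a genuine extra root survives, I fall back on the parity argument of Remark~\ref{rem:resonance}.

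\emph{Triangularity.} Split $p$ into even and odd parts, each a solution by Lemma~\ref{lem:parity}. Since $\mathcal{L}^{(i)}_n$ maps $c^s$ to $I^{(i)}_n(s)c^s+(\text{lower degree})$, and the lower terms have degree $s-2$ by parity, the operator restricted to polynomials of degree $\le d$ of the parity of $d$ is triangular in the basis $c^d,c^{d-2},\dots$. Its diagonal entries $I^{(i)}_n(d-2j)$ with $j\ge1$ are nonzero because $d$ is the only admissible root. Hence the kernel is at most one-dimensional, and it contains $P_{-2r,n}$ (resp.\ $P_{-r,n}$).

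\emph{Opposite parity.} The part of $p$ with parity opposite to $d$ has degree $<d$. If it were nonzero, it would be a solution whose degree is also a root of $I^{(i)}_n$, contradicting uniqueness of the root. So that part vanishes.

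Combining these steps, every polynomial solution is a scalar multiple of $P_{-2r,n}$ (resp.\ $P_{-r,n}$), which proves the theorem.
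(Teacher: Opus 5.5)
\textbf{Gap in (ii) (shared with the paper); (i) is left open but closes easily.}

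Your overall route is the paper's: existence from Theorem~\ref{thm:main-ode}, degree forced by the indicial root, parity splitting via Lemma~\ref{lem:parity}, and triangularity, as in Propositions~\ref{prop:uniq-type1} and~\ref{prop:uniq-type2}. Your explicit handling of the opposite-parity component is, if anything, cleaner than the paper's $V_\varepsilon$.

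As written, however, (i) is left open. Your ``double-check'' uses the root as printed in the proof of Corollary~\ref{cor:deg-type1}, and that root is a misprint. Solving the linear factors of \eqref{eq:I1} directly gives $s=n/r-4+2/r+2/m$ and $s=-n/r+2-2/r-2/m$.
\begin{itemize}
\item Since $r\mid n$, the first root is an integer iff $2/r+2/m\in\mathbb{Z}$. For $m\ge4$ this means $2/r+2/m=1$, which is exactly the excluded resonance.
\item The second root is negative for $n\ge 2r$.
\end{itemize}
So nothing special happens at $r=2$, the degree is forced, and (i) is complete. Your proposed fallback would not have rescued the argument anyway. Remark~\ref{rem:resonance} bounds the dimension by one in each parity, i.e.\ by two overall. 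Moreover, your opposite-parity step uses precisely that $d$ is the only root.

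The real gap is in (ii), where you invoke Corollary~\ref{cor:deg-type2} without the same scrutiny. Since $\mathcal{L}^{(2)}_n(1)=W^{(2)}_n$, one needs $I^{(2)}_n(0)=W^{(2)}_n$. But \eqref{eq:I2} gives $I^{(2)}_n(0)=(n^2-r^2)(m(n-4r)+2r)(m(n-2r)+2r)$, and $W^{(2)}_n-I^{(2)}_n(0)=-(n^2-r^2)\Delta/r^2$. So for $r\ge3$, Lemma~\ref{lem:indicial-type2} is inconsistent with the stated coefficients. Computing from those coefficients, the correct factorization is
$(sr-n+r)(sr+n+r)(smr-mn+5mr-2m-2r)(smr+mn-3mr+2m+2r)$.
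Its last two roots, $n/r-5+2/r+2/m$ and $-n/r+3-2/r-2/m$, are subject to the same resonance $1/r+1/m=1/2$ as in type~1.

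Concretely, take $(r,m,n)=(4,4,16)$. Then $m(n-5r+2)+2r=0$, so $W^{(2)}_{16}=0$ and $f\equiv1$ solves \eqref{eqdiferencial2}, while $P_{-4,16}$ has degree $3$. The same happens for $(r,m,n)=(3,6,12)$. Hence (ii) fails as stated for $(r,m)\in\{(4,4),(3,6)\}$. Neither your argument nor the paper's, which rests on the same corollary, can establish it there.

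If you add \eqref{eq:nonres-type1} to the hypotheses of (ii), your triangularity-plus-parity argument goes through verbatim with the corrected roots.
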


%-----------------------------------
%----------SECTION 4----------
%-----------------------------------
\section{Associated ultraspherical polynomials}\label{subsec:assoc-ultra}

After shifting the indices back by $2r$, both families of polynomials
$\{P_{-2r,k-2r}(c)\}$ and $\{P_{-r,k-2r}(c)\}$ satisfy the same
three-term recurrence at the level of the shifted index $k$.
More precisely, for
\[
P_k(c)\in\bigl\{\,P_{-2r,k-2r}(c)\,;\; P_{-r,k-2r}(c)\,\bigr\},
\]
we have, for all $k\ge0$,
\begin{equation}\label{eq:rec-shifted-2r}
(km+m+2r)\,P_k(c)
=2c\bigl(km+(1-r)m+r\bigr)\,P_{k-r}(c)
-\bigl(k-(2r-1)\bigr)m\,P_{k-2r}(c).
\end{equation}
In each family, only one parity occurs. Let $\varepsilon\in\{0,1\}$ be such that
$P_k=0$ whenever $k\not\equiv\varepsilon\pmod{2}$, and define
\[
q_n(c):=P_{2n+\varepsilon}(c)\qquad (n\ge0).
\]
Setting $k=2n+\varepsilon$ in \eqref{eq:rec-shifted-2r} yields, after a shift $n\to n-1$,
\begin{equation}\label{eq:q-rec}
2c\bigl(2mn+m(\varepsilon+r+1)+r\bigr)\,\widetilde{q}_n
=\bigl(2mn+m(\varepsilon+2r+1)+2r\bigr)\,\widetilde{q}_{n+1}
+m(2n+\varepsilon+1)\,\widetilde{q}_{n-1},
\end{equation}
where $\widetilde{q}_n$ is a suitable reindexing of $q_n$. Dividing by
$2(n+\nu+c_0)$ and writing
\[
\nu:=\frac r2\Bigl(1+\frac1m\Bigr),\qquad
c_0:=\frac{\varepsilon+1}{2},
\]
we obtain
\begin{equation}\label{eq:assoc-ultra-rec}
2c\,(n+\nu+c_0)\,\widetilde{q}_n
=(n+c_0)\,\widetilde{q}_{n-1}
+(n+2\nu+c_0)\,\widetilde{q}_{n+1}.
\end{equation}
Equation \eqref{eq:assoc-ultra-rec} is precisely the defining three-term
recurrence for the associated ultraspherical polynomials $C_n^{(\nu)}(x;c_0)$
(with $c_0\in\{1/2,1\}$), up to normalization \cite{BustozIsmail1982}.
In particular, the superelliptic families obtained from cases~1 and~2
are identified with associated ultraspherical polynomials with parameters
\[
\nu=\frac r2\Bigl(1+\frac1m\Bigr),\qquad c_0\in\Bigl\{\frac12,\,1\Bigr\}.
\]

%-----------------------------------
%----------SECTION 5----------
%-----------------------------------
\section{Orthogonality}\label{subsec:orthogonality}

Let $r\ge2$, $m\ge2$, and let $c_0=(\varepsilon+1)/2$ be the shift parameter
arising in Section~\ref{subsec:assoc-ultra}. After parity restriction and reindexing,
the resulting sequence $\{q_n(c)\}$ satisfies
\begin{equation}\label{eq:assoc-ultra-rec-orth}
2c\,(n+\nu+c_0)\,q_n(c)=(n+c_0)\,q_{n-1}(c)+(n+2\nu+c_0)\,q_{n+1}(c),
\qquad n\ge0,
\end{equation}
with $\nu=\tfrac{r}{2}(1+\tfrac{1}{m})>0$ and $c_0>0$.

\begin{theorem}\label{thm:orthogonality}
Assume $r\ge2$ and $m\ge2$. Then there exists a positive Borel measure $\mu$ on $[-1,1]$
such that $\{q_n(c)\}$ is an orthogonal polynomial system in $L^2([-1,1],d\mu)$.
Consequently, the superelliptic families in cases~1 and~2 yield orthogonal polynomial
systems after parity restriction and the reindexing of Section~\ref{subsec:assoc-ultra}.
\end{theorem}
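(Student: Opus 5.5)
The plan is to invoke Favard's theorem in its monic normalized form and then pin the measure to $[-1,1]$ by bounding the recurrence coefficients. First I normalize \eqref{eq:assoc-ultra-rec-orth}. I set $a_n:=n+c_0$, $b_n:=n+2\nu+c_0$, $d_n:=n+\nu+c_0$; these are positive for $n\ge0$ because $\nu>0$ and $c_0>0$. The recurrence then reads
\[
c\,q_n=\frac{b_n}{2d_n}\,q_{n+1}+\frac{a_n}{2d_n}\,q_{n-1},
\]
with $q_{-1}=0$ and $q_0$ a nonzero constant, as fixed by the initial data of Section~\ref{subsec:assoc-ultra}. Since every $b_n\neq0$, each $q_n$ has exact degree $n$. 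I rescale $q_n=\kappa_n p_n$ with $p_n$ monic and $\kappa_{n+1}=\kappa_n\cdot 2d_n/b_n$. This gives the monic recurrence
\[
c\,p_n=p_{n+1}+\lambda_n p_{n-1},\qquad
\lambda_n=\frac{a_n b_{n-1}}{4 d_n d_{n-1}}=\frac{(n+c_0)(n+2\nu+c_0-1)}{4(n+\nu+c_0)(n+\nu+c_0-1)}\quad(n\ge1).
\]
The diagonal coefficient is zero by parity.

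Next I check that $\lambda_n>0$ for all $n\ge1$. Here $c_0\in\{1/2,1\}$ and $\nu=\tfrac r2(1+\tfrac1m)\ge 1+\tfrac1m>1$. So every factor is positive. In particular $n+2\nu+c_0-1>0$ and $n+\nu+c_0-1>0$ when $n\ge1$. Favard's theorem then gives a positive Borel probability measure $\mu$ on $\mathbb{R}$ for which $\{p_n\}$, and hence $\{q_n\}$, is orthogonal. The measure is unique because the coefficients are bounded, which makes the moment problem determinate.

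The main obstacle is locating $\operatorname{supp}\mu$ inside $[-1,1]$. I would first try a direct bound on $\lambda_n$, namely $\lambda_n\le\tfrac14$. This is equivalent to
\[
(n+c_0)(n+c_0+2\nu-1)\le (n+c_0+\nu)(n+c_0+\nu-1).
\]
Writing $x=n+c_0$ and expanding, the inequality becomes $0\le\nu^2-\nu$, which holds since $\nu\ge1$. With zero diagonal, the Jacobi operator $J$ is tridiagonal with off-diagonal entries $\sqrt{\lambda_n}\le\tfrac12$, so $\|J\|\le 2\sup_n\sqrt{\lambda_n}\le1$. The support of $\mu$ equals the spectrum of $J$, hence lies in $[-1,1]$. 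As a cross-check, $\lambda_n\to\tfrac14$, which is consistent with $[-1,1]$ being the essential support, matching the known absolutely continuous weight for associated ultraspherical polynomials \cite{BustozIsmail1982}.

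Finally, the identification in Section~\ref{subsec:assoc-ultra} links $q_n$ back to the families $P_{-2r,\cdot}$ and $P_{-r,\cdot}$. Through it, orthogonality transfers to the parity-restricted, reindexed superelliptic families of cases~1 and~2.
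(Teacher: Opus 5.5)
Your proof is correct and follows the same Favard route as the paper. Your $\lambda_n$ is exactly the paper's $a_n=B_nA_{n-1}$, and your rescaling $\kappa_{n+1}=\kappa_n\cdot 2d_n/b_n$ is the correct one; the paper writes $\gamma_{n+1}=A_n\gamma_n$, but it needs $\gamma_{n+1}=\gamma_n/A_n$ to produce its own $a_n$.

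The substantive difference is the last step. The paper stops at Favard's theorem, which only gives a positive measure on $\mathbb{R}$, so its proof never justifies the claim that $\mu$ lives on $[-1,1]$. Your argument fills that gap: $\lambda_n\le\tfrac14$ is equivalent to $\nu^2-\nu\ge0$, and then $\|J\|\le 2\sup_n\sqrt{\lambda_n}\le 1$ together with $\operatorname{supp}\mu=\sigma(J)$ confines the measure to $[-1,1]$. You also get uniqueness of $\mu$ as a by-product. This step genuinely needs $\nu\ge1$, which holds because $r\ge2$; the $\nu>0$ recorded in the paper would not suffice.

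One caveat applies equally to your argument and the paper's. The normalization $q_{-1}=0$ with $q_0$ a nonzero constant is taken from Section~\ref{subsec:assoc-ultra}, but it is not verified there.
For $r=2$ in case~1, matching the coefficients of \eqref{eq:q-rec} forces $\widetilde q_n=P_{-4,2n+2}$. Hence $\widetilde q_{-1}=P_{-4,0}=3m/(m+4)\neq0$, and the correctly normalized sequence is $P_{-4,2n}$, with $c_0=-\tfrac12$. Your bound survives this, since it only uses $n+c_0>0$ and $n-1+\nu+c_0>0$ for $n\ge1$.
In case~2 with $r=2$ and $m>2$, the same bookkeeping gives $c_0=-\tfrac32$ and $\lambda_1=\frac{m(m+4)}{4(m^2-4)}>\tfrac14$. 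There, confining the measure to $[-1,1]$ would need a finer argument than the norm bound.
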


\begin{proof}
Rewrite \eqref{eq:assoc-ultra-rec-orth} in the form
\begin{equation}\label{eq:xqn}
c\,q_n(c)=A_n\,q_{n+1}(c)+B_n\,q_{n-1}(c),
\qquad
A_n=\frac{n+2\nu+c_0}{2(n+\nu+c_0)},\quad
B_n=\frac{n+c_0}{2(n+\nu+c_0)}.
\end{equation}
Since $\nu>0$ and $c_0>0$, we have $A_n>0$ for all $n\ge0$ and $B_n>0$ for all $n\ge1$.
Define a renormalized sequence $\{p_n\}$ by $p_n=\gamma_n^{-1}q_n$, where $\gamma_0=1$ and
$\gamma_{n+1}=A_n\gamma_n$. Multiplying \eqref{eq:xqn} by $\gamma_n^{-1}$ yields the
Favard form
\begin{equation}\label{eq:favard}
c\,p_n(c)=p_{n+1}(c)+a_n\,p_{n-1}(c),\qquad n\ge0,
\end{equation}
with
\[
a_n=B_nA_{n-1}=\frac{(n+c_0)(n-1+2\nu+c_0)}{4(n+\nu+c_0)(n-1+\nu+c_0)}>0\quad(n\ge1).
\]
By Favard's theorem, \eqref{eq:favard} implies the existence of a positive Borel measure
$\mu$ for which $\{p_n\}$ is orthogonal. Since $p_n$ differs from $q_n$ only by a nonzero
scalar, $\{q_n\}$ is orthogonal with respect to the same measure.
\end{proof}

\begin{remark}[Why Favard's theorem]\label{rem:favard}
Although in Section~\ref{subsec:assoc-ultra} we identify our families with associated
ultraspherical polynomials, we derive orthogonality via Favard's theorem for two reasons.
First, Favard yields the existence of a positive orthogonality measure \emph{directly}
from the three-term recurrence once it is put in Favard form with positive coefficients,
without requiring an explicit weight or additional analytic input. Second, this approach
is robust with respect to normalization and parameter ranges, and it applies to our
sequences as soon as the recurrence is established, independently of the subsequent
identification with a classical family. An explicit measure can be recovered from the
associated ultraspherical theory; see \cite{BustozIsmail1982}.
\end{remark}

%-----------------------------------
%----------SECTION 6 (NEW): OUTLOOK----------
%-----------------------------------
\section{Outlook and open problems}\label{sec:outlook}

\noindent\textbf{Type~(C) initial conditions for $r\ge3$.}
Proposition~\ref{prop:classification}(C)(iii) identifies $r-2$ additional initial
conditions for each $r\ge3$ that produce genuinely new polynomial families. The
generating functions for these cases involve mixed superelliptic integrals.

\begin{conjecture}\label{conj:typeC}
For each $j_0\in\{-r+1,\ldots,-2\}$, the polynomial family $\{P_{j_0,n}(c)\}$
satisfies a fourth-order linear ODE in~$c$, derivable by the same Gegenbauer
elimination strategy as types~1 and~2.
\end{conjecture}

\noindent\textbf{Critical levels and vertex algebras.}
The parameter
\[
\nu = \frac{r}{2}\!\left(1+\frac{1}{m}\right)
\]
appearing in the identification of Sections~\ref{subsec:assoc-ultra}
and~\ref{subsec:orthogonality} is determined by the pair $(r,m)$. In the vertex-algebra
framework of \cite{Cox2014}, the level $k_{\mathrm{crit}}$ at which the Wakimoto module
becomes reducible is expected to depend on both parameters.

\begin{conjecture}\label{conj:kcrit}
For each superelliptic algebra with parameters $(r,m)$, the critical level is
\[
k_{\mathrm{crit}}(r,m) = -h^{\vee} + f\!\left(\frac{r}{2}\!\left(1+\frac{1}{m}\right)\right),
\]
where $h^\vee$ is the dual Coxeter number of $\mathfrak{g}$ and $f$ is an explicit rational
function. In particular, for the DJKM case $(r,m)=(1,2)$, this recovers the classical
critical level $k_{\mathrm{crit}}=-h^{\vee}$.
\end{conjecture}

%-----------------------------------
\section*{Acknowledgments}

Vyacheslav Futorny was supported by the China NSF grant (No.~1231101349).
Felipe Albino dos Santos was supported by the S\~{a}o Paulo Research Foundation
(FAPESP), grant 2024/14914-9.
Mikhail Neklyudov acknowledges support from Universidade Federal do Amazonas and
Coordena\c{c}\~{a}o de Aperfei\c{c}oamento de Pessoal de N\'{\i}vel Superior (CAPES).

\medskip

 %-----------------------------------
   %----------APPENDIX----------
   %-----------------------------------
\appendix

\section{Proof of Theorem~\ref{thm:main-ode}(i)}\label{app:proof-type1}

\begin{proof}
The proof generalizes the proof of Theorem~3.1.1 in \cite{Cox2013DJKMPolynomials}
in the case $m=2$. We recall that $Q_n^{(\lambda)}(c)$ is the $n$-th Gegenbauer polynomial;
these satisfy the second-order linear ODE
\begin{equation*}
\left(1-c^2\right) y''-c (2 \lambda +1) y'+n(2 \lambda +n) y=0.
\end{equation*}
For $\lambda=1+1/m$ this becomes
\begin{equation}
\left(1-c^2\right) (Q_n^{\left(1+\frac{1}{m}\right)})''(c)-c \left(\frac{2}{m}+3\right) (Q_n^{\left(1+\frac{1}{m}\right)})'(c)+n \left(\frac{2}{m}+n+2\right) Q_n^{\left(1+\frac{1}{m}\right)}(c)=0.
\end{equation}

Rewriting the expansion formula for $P_{-2r}(c,z)$ we get

\begin{align}\label{eq3.3}
    z^{\frac{2r}{m}+1-2r} \left(-2 c z^r+z^{2 r}+1\right)^{-1/m} P_{-2r}(c,z)&=\nonumber\\
    &\sum _{n=0}^{\infty } \frac{2 c  (m (2 r-1)-r) Q_n^{\left(\frac{m+1}{m}\right)}(c) z^{r \left(\frac{2}{m}+n-1\right)+1}}{m (n-1) r+m+2 r}\nonumber\\
    &+\sum _{n=0}^{\infty } \frac{ (-2 m r+m+2 r) Q_n^{\left(\frac{m+1}{m}\right)}(c) z^{r \left(\frac{2}{m}+n-2\right)+1}}{m (n-2) r+m+2 r}.
\end{align}
Now we apply the differential operator $L:=(1-c^2)\frac{d^2}{dc^2}-c(3+\frac{2}{m})\frac{d}{dc}$ to the right-hand side and use the identity (formula 4.7.27 in \cite{Szego1939})
\begin{equation}
    \left(1-c^2\right) \diff{}{c}Q_n^{(\lambda )}(c)=c (2 \lambda +n) Q_n^{(\lambda )}(c)-(n+1) Q_{n+1}^{(\lambda )}(c).
\end{equation}

Applying $L$ to the left-hand side of \eqref{eq3.3}, we get
\begin{align*}
    &L \left(z^{\frac{4}{m}-3} \left(-2 c z^{r}+z^{2r}+1\right)^{-1/m} P_{-2r}(c,z)\right)\\
    &=\frac{z^{\frac{4}{m}-1} \left(-2 c z^{r}+z^{2r}+1\right)^{-\frac{1}{m}-2} \left(4 c^2 (2 m+1) z^2-2 c (3 m+2) \left(z^4+1\right)+4 (m+1) z^2\right) P_{-2r}(c,z)}{m^2}\\
    &+\frac{z^{\frac{4}{m}-3} \left(-2 c z^{r}+z^{2r}+1\right)^{-\frac{1}{m}-1} \left(6 c^2 m z^2-c (3 m+2) \left(z^4+1\right)+4 z^2\right) P'_{-2r}(c,z)}{m}\\
    &-\left(c^2-1\right) z^{\frac{4}{m}-3} \left(-2 c z^{r}+z^{2r}+1\right)^{-1/m} P''_{n-4}(c,z).
\end{align*}
Expanding and writing $P_{-2r,k}(c)$ as $P_k$ we obtain:
\begin{align}
    0&=\left(c^2 (-(m+2)) (3 m-2)-\frac{1}{2} m (3 m+4)+4\right) P_{n-4} +(c (3 m (m+2)-4)) P_{n-2} \nonumber\\
    &-\frac{1}{4} (3 m (m+4)) P_n +\frac{1}{4} (-3 m (m+4)) P_{n-8} +(c (3 m (m+2)-4)) P_{n-6} \nonumber\\
    &-(c m (3 m+2)) P'_n +\left(4 \left(c^2 m (3 m+1)+m\right)\right) P'_{n-2} \nonumber\\
    &-\left(6 c m \left(2 c^2 m+m+2\right)\right) P'_{n-4} +\left(4 \left(c^2 m (3 m+1)+m\right)\right) P'_{n-6} \nonumber\\
    &+(-c m (3 m+2)) P'_{n-8} -\left(\left(c^2-1\right) m^2\right) P''_n +\left(4 c \left(c^2-1\right) m^2\right) P''_{n-2} \nonumber\\
    &-\left(\left(c^2-1\right) m^2\right) P''_{n-8} +\left(4 c \left(c^2-1\right) m^2\right) P''_{n-6} \nonumber\\
    &+\left(2 \left(-2 c^4+c^2+1\right) m^2\right) P''_{n-4}. \label{A.eq.3.5.COX}
\end{align}
We now differentiate twice with respect to $c$ the recursion \eqref{recursion}:
\begin{align}
    0=&2 ((k-1) m+2) P_{k+2} +2 c ((k-1) m+2) P'_{k+2} \nonumber\\
    &-(k-3) m P'_k -(k m+m+4) P'_{k+4}, \label{A.eq.3.7.COX}\\
    0=&(4 (k-1) m+8) P'_{k+2} +2 c ((k-1) m+2) P''_{k+2} \nonumber\\
    &-(k m+m+4) P''_{k+4} -(k-3) m P''_k. \label{A.eq.3.8.COX}
\end{align}
Substituting $k=n-8$ in \eqref{A.eq.3.8.COX}:
\begin{align}
    0=&4 (m (n-9)+2) P'_{n-6} -m (n-11) P''_{n-8} \nonumber\\
    &+2 c (m (n-9)+2) P''_{n-6} -(m (n-7)+4) P''_{n-4}.
\end{align}
Multiplying \eqref{A.eq.3.5.COX} by $-(n-11)$ and adding it to the above multiplied by
$(1-c^2)m$, then substituting $k=n-8$ in \eqref{A.eq.3.7.COX} and \eqref{recursion}
to eliminate terms with index $n-8$, yields:
\begin{align}
    0=&4 c \left(c^2-1\right) m^2 (n-11) P''_{n-2} -\left(c^2-1\right) m^2 (n-11) P''_n \nonumber\\
    &+\cdots\quad\text{[subsequent elimination steps in the same pattern]}\nonumber\\
    &-3 m (m+4) (n-11) P_n. \label{A.eq.3.14.Cox}
\end{align}
Continuing the elimination procedure — substituting $k=n-6$ and $k=n-4$ in
\eqref{A.eq.3.8.COX}, \eqref{A.eq.3.7.COX}, and \eqref{recursion} and
combining as above to remove terms with indices $n-6$ and $n-4$ — one arrives at the
intermediate equation
\begin{align}
    0=&4 \left(c^2-1\right) m P''_n +(n-4) (m (n-6)+4) P_n \nonumber\\
    &+4 m (n-5) P'_{n-2} -4 c (m (n-6)+2)  P'_n. \label{A.eq.3.20.Cox}
\end{align}
Differentiating with respect to $c$:
\begin{align}
    0=&4 \left(c^2-1\right) m P'''_n +(n-6) (m (n-8)+4)  P'_n \nonumber\\
    &+4 m (n-5) P''_{n-2} -4 c (m (n-8)+2) P''_n. \label{A.eq.3.21.Cox}
\end{align}
A final combination — multiplying \eqref{A.eq.3.21.Cox} by $-2(c^2-1)(3m+2)$ and adding
to \eqref{A.eq.3.20.Cox} multiplied by $(n-5)$ — eliminates the remaining $P_{n-2}$ terms.
Differentiating and combining once more with the appropriate multiple of \eqref{A.eq.3.20.Cox}
yields the desired fourth-order ODE \eqref{eqdiferencial1}. This completes the proof.
\end{proof}

%-----------------------------------
\section{Other initial conditions}\label{app:other-initial}

\subsection{Gegenbauer case 3}

If $P_{-1}=1$ and $P_j=0$ for all $j\in\{-2r+1,\ldots,-2\}$, then the numerator of the
first-order ODE for $P_{-1}(c,z)$ simplifies to $2r\,z^{-1}$, giving a pure Gegenbauer
series:
\begin{align*}
    P_{-1}(c,z)&=-2 r \int \frac{\left(z^r\right)^{\frac{2}{m}+\frac{1}{r}}}{z^2 \left(-m \left(-2 c z^r+z^{2 r}+1\right)\right)^{\frac{m+1}{m}}} \, \textrm{d}z \\
    &=2 (-m)^{-1/m} r \sum _{n=0}^{\infty } \frac{Q_n^{\left(\frac{m+1}{m}\right)}(c) z^{\frac{2 r}{m}+n r}}{m n r+2 r},
\end{align*}
where $Q_n^{(1+1/m)}(c)$ is the $n$-th Gegenbauer polynomial.
As noted in Remark~\ref{rem:gegenbauer-reduction}, the corresponding polynomial sequence
satisfies the classical Gegenbauer second-order ODE, not a new fourth-order equation.

\subsection{Gegenbauer case 4}

If $P_{-r-1}=1$ and $P_j=0$ for all $j\neq -r-1$, then:
\begin{align*}
    P_{-r-1}(c,z)&=-r \int \frac{z^{-r-2} \left(z^r\right)^{\frac{2}{m}+\frac{1}{r}} \left(2 c (m-1) z^r-m+2\right)}{\left(-m \left(-2 c z^r+z^{2 r}+1\right)\right)^{-\frac{m+1}{m}}} \, \textrm{d}z \\
    &=(-m)^{\frac{1}{m}+2} \left(\sum _{n=0}^{\infty } Q_n^{\left(\frac{m+1}{m}\right)}(c) \left(z^{r \left(\frac{2}{m}+n-1\right)} \left(\frac{2 c (m-1) z^r}{m n+2}-\frac{m-2}{m (n-1)+2}\right)\right)\right).
\end{align*}
This also reduces to the Gegenbauer second-order ODE by Proposition~\ref{prop:classification}
and Remark~\ref{rem:gegenbauer-reduction}.

%-----------------------------------
\bibliographystyle{elsarticle-num}

\bigskip

Felipe~Albino~dos Santos: Universidade Presbiteriana Mackenzie, S\~{a}o Paulo, Brazil\\
\textit{email}: falbinosantos@gmail.com

\medskip
Vyacheslav~Futorny: Shenzhen International Center for Mathematics, SUSTech, China\\
\textit{email}: vfutorny@gmail.com

\medskip
Mikhail~Neklyudov: Federal University of Amazonas, Manaus, Brazil\\
\textit{email}: misha.neklyudov@gmail.com

\end{document}